\theoremstyle{plain}
\newtheorem{theorem}{Theorem}[section]
\newtheorem{prop}[theorem]{Proposition}
\newtheorem{lemma}[theorem]{Lemma}
\theoremstyle{definition}
\newtheorem{definition}[theorem]{Definition}
\def\RR{\Bbb R}
\def \R   {\mathbb{R}^n}
\def \RN   {\mathbb{R}^n}
\def \UHRN {{\mathbb R}^{n+1}_+}
\def \HMAX {H^p_{L,max}(\mathbb{R}^n)}
\title[A maximal function characterization for Hardy spaces ]
{A maximal function characterization for Hardy spaces\\ associated to nonnegative self-adjoint  operators\\ satisfying
Gaussian estimates}
\author{Liang Song \ and \ Lixin Yan  }
\address{
   Liang Song,
    Department of Mathematics,
    Sun Yat-sen University,
    Guangzhou, 510275,
    P.R.~China}
\email{ songl@mail.sysu.edu.cn}
\address{
    Lixin Yan,
    Department of Mathematics,
    Sun Yat-sen University,
    Guangzhou, 510275,
    P.R.~China}
\email{
    mcsylx@mail.sysu.edu.cn}
\subjclass[2010]{Primary: 42B30; Secondary: 42B35, 47B38.}
\keywords{Hardy spaces, atomic decomposition, the nontangential maximal functions,
nonnegative self-adjoint operators, heat semigroup, Gaussian estimates.}
\begin{document}

\begin{abstract}
Let $L$ be
a nonnegative, self-adjoint operator satisfying Gaussian estimates
 on $L^2(\RR^n)$.  In this article we give an atomic decomposition for the Hardy spaces
 $ H^p_{L,max}(\R)$  in terms of the nontangential maximal functions associated with the heat semigroup
 of $L$,  and this leads   eventually to characterizations of Hardy spaces associated to $L$,  via
  atomic decomposition or the nontangential  maximal functions.
  The proofs  are based on a modification of technique due to A. Calder\'on \cite{C}.
\end{abstract}

\maketitle


\section{Introduction}\label{sec:intro}
\setcounter{equation}{0}

The introduction and development of  Hardy
 spaces on Euclidean spaces
 ${\mathbb R}^n$ in the 1960s    played an important role
 in modern harmonic analysis and applications in partial differential equations.
Let us recall
the definition of the   Hardy spaces  (see \cite{Co, FS, L, St2, SW}).
Consider the Laplace operator $\Delta=-\sum_{i=1}^n\partial_{x_i}^2$ on the Euclidean spaces $\mathbb R^n$.
For $0<p<\infty$, the   Hardy space
$H^p({\mathbb{R}}^n)$ is defined as the space of tempered distribution
$f\in{\mathscr S}'({\mathbb{R}}^n)$ for which
  the area integral function of
$  f$ satisfies
\begin{eqnarray}
Sf (x) :=\left( \int_0^{\infty}\!\!\!\!\int_{ |y-x|<t}
\big|t^2 \triangle
e^{-t^2{\triangle}}f(y)\big|^2\  {dy \ \! dt \over t^{n+1}}\right)^{1/2}
\label{e1.1}
\end{eqnarray}
belongs to $L^p({\mathbb{R}}^n)$. If this is the case, define
\begin{eqnarray}\label{e1.2}
\|f\|_{H^p({\mathbb{R}}^n)}:=\|Sf\|_{L^p({\mathbb{R}}^n)}.
\end{eqnarray}
When $p>1$, $H^p({\mathbb{R}}^n)=L^p({\mathbb{R}}^n)$. For $p\leq 1,$ the space $H^p({\mathbb{R}}^n)$  involves many different
characterizations. For example, if $f\in{\mathscr S}'({\mathbb{R}}^n)$, then
\begin{eqnarray}\label{e1.3}
f\in H^p({\mathbb{R}}^n)
& \substack{{\rm (i)}\\ \Longleftrightarrow}& \sup_{t>0}\big|e^{-t^2\Delta}f(x)\big|\in L^p({\mathbb{R}}^n)\nonumber\\
&\substack{{\rm (ii)}\\ \Longleftrightarrow}& \sup_{|y-x|<t}\big|e^{-t^2\Delta}f(y)\big|\in L^p({\mathbb{R}}^n)\nonumber\\
&\substack{{\rm (iii)}\\  \Longleftrightarrow}&
 f {\rm \ has\ a \  {\it (p, q)}  \ atomic \ decomposition}\
 f=\sum_{j=0}^{\infty}\lambda_j a_j  \ {\rm with\ }   \sum_{j=0}^{\infty}|\lambda_j|^p<\infty.
\end{eqnarray}
  Recall that a   function $a$ supported in ball $B$ of ${\mathbb R}^n$
  is called
  a  {\it $(p, q)$}-atom, $0<p\leq 1\leq q\leq \infty$, $p<q$,  if
  $\|a\|_{L^q(B)}\leq |B|^{{1\over q}-{1\over p}}$,
and  $\int_B x^\alpha a(x)dx=0,$ where $\alpha$ is a multi-index of order $|\alpha|\leq  \left[n({1/p}-1)\right],$
the integer part of $ n({1/p}-1)$ (see \cite{Co, L, St2}).

 The theory of classical Hardy spaces has been very successful and fruitful
in the  past decades.
 However,
there are important situations in which
the standard theory of Hardy spaces    is not applicable, including certain problems in
  the theory of
 partial differential equation which involves  generalizing  the  Laplacian.
 There is a need to consider Hardy spaces that are  adapted to a linear
 operator $L$, similarly to the way that the standard theory of Hardy spaces  are adapted to the Laplacian.
 This topic  has attracted a lot of attention in the last   decades,
 and  has been a  very active research topic  in harmonic analysis -- see for example,
    \cite{ADM, AMR, BeZ,   DL, DY, DZ,  HLMMY, HM, HMMc, JY,  Y}.

In this article, we assume that  $L$ is a densely-defined operator
on $L^2(\RR^n) $ and satisfies  the following properties:

\smallskip

  \noindent
 {\bf   (H1)} \
  $L$ is a second order non-negative self-adjoint operator on $L^2(\RR^n)$;

\smallskip

 \noindent
 {\bf   (H2)} \ The kernel of $e^{-tL}$, denoted by $p_t(x,y)$,
 is a measurable function on $\RR^n\times \RR^n$ and  satisfies
a Gaussian upper bound, that is
$$ \leqno{\rm (GE)}\hspace{4cm}
\big|p_t(x,y)\big|\leq C t^{-n/2} \exp\Big(-{  {|x-y|^2}\over  ct}\Big)
$$
for all $t>0$,  and $x,y\in \RR^n,$   where $C$ and $c$   are positive
constants.

 Given  a function $f\in L^2(\RR^n)$, consider the following
 area function   $S_{L}f$   associated to the heat semigroup
generated by $L$
\begin{equation}
S_{L}f(x):=\left(\int_0^{\infty}\!\!\int_{|x-y|<t}
|t^2Le^{-t^2L} f(y)|^2 {dydt\over   t^{n+1}}\right)^{1/2},
\quad x\in \RR^n.
\label{e1.5}
\end{equation}
Under the assumptions  ${\bf (H1)}$ and ${\bf (H2)}$ of an operator $L$,
it is known (see for example, \cite{Au, ADM}) that the function   $S_{L}$  is   bounded on $L^p({\mathbb R}^n ), 1<p<\infty$
and
\begin{equation}
  \|S_Lf\|_{L^p(\RR^n)} \simeq \|f\|_{L^p(\RR^n)}.
\label{e1.6}
\end{equation}

\begin{definition}\label{def1.1}
  Suppose  that an operator $L$ satisfies  ${\bf (H1)}$-${\bf (H2)}$.
 Given $0<p\leq 1$. The Hardy space $H^p_{L,  S}(\RR^n) $ is defined as the completion of
 $\{ f\in L^2(\RR^n):\, \|S_{L} f\|_{L^p(\RR^n)}<\infty \}
 $
 with  norm
$$
 \|f\|_{H^p_{L, S}(\RR^n)}:=\|S_{L} f\|_{L^p(\RR^n)}.
$$
\end{definition}

\smallskip

To  describe  an atomic  character  of
the   Hardy spaces, let us recall the notion of
 $(p,q,M)$-atom associated to
an operator $L$ (\cite{DL, HLMMY}).

\begin{definition}\label{def1.2}
Given   $0<p\leq 1\leq q\leq \infty$, $p<q$  and   $M\in {\mathbb N}$,
a function $a\in L^2(\RR^n)$ is called a $(p,q,M)$-atom associated to
the operator $L$ if there exist a function $b\in {\mathcal D}(L^M)$
and a ball $B\subset \RR^n$ such that

\smallskip

{  (i)}\ $a=L^M b$;

\smallskip

{  (ii)}\ {\rm supp}\  $L^{k}b\subset B, \ k=0, 1, \dots, M$;

\smallskip

{  (iii)}\ $\|(r_B^2L)^{k}b\|_{L^q (\RR^n)}\leq r_B^{2M}
|B|^{1/q-1/p},\ k=0,1,\dots,M$.
\end{definition}

\smallskip

The  atomic Hardy space $H^p_{L, {\rm at}, q, M}(\R)
 $ is defined as follows.

\begin{definition} \label{def1.3}
 We will say that $f= \sum\lambda_j
a_j$ is an atomic $(p, q, M)$-representation (of $f$) if $
\{\lambda_j\}_{j=0}^{\infty}\in {\ell}^p$, each $a_j$ is a
$(p, q, M)$-atom, and the sum converges in $L^2(\R).$  Set
\begin{eqnarray*}
\mathbb  H^p_{L, {\rm at}, q, M}(\R):=\Big\{f:  f \mbox{  has
an atomic $(p, q, M)$-representation} \Big\},
\end{eqnarray*}
with the norm $\big\|f\big\|_{\mathbb  H^p_{L, {\rm at}, q, M}(\R)}$ given by
\begin{eqnarray*}
\inf\Big\{\Big(\sum_{j=0}^{\infty}|\lambda_j|^p\Big)^{1/p}:
f=\sum\limits_{j=0}^{\infty}\lambda_ja_j \ \mbox{ is an atomic
$(p,q,M)$-representation}\Big\}.
\end{eqnarray*}
The space $H^p_{L, {\rm at}, q, M}(\R)$ is then defined as
the completion of $\mathbb  H^p_{L, {\rm at}, q, M}(\RR^n)$ with
respect to this norm.
 \end{definition}

Obviously,
$H^p_{L, {\rm at}, q_2, M}(\R)\subseteq H^p_{L, {\rm at}, q_1, M}(\R)
$  when $1<q_1\leq q_2\leq \infty$.
 Under the assumption  that  an operator $L$ satisfies conditions    ${\bf (H1)}$-${\bf (H2)}$,
   S. Hofmann, G. Lu, D. Mitrea,
 M. Mitrea and the second named author of this article    obtained
 a  $(1, 2, M)$-atomic decomposition of
 the   Hardy space $H^1_{L,S}({\mathbb R^n})$, and showed that
 for every number $M> 1$,
the spaces $ H^1_{L,  S}(\RR^n)$ and $H^1_{L, {\rm at}, 2, M}(\RR^n)$ coincide (see \cite{HLMMY}). In particular,
$$
\|f\|_{ H^1_{L,  S}(\RR^n)}\approx\| f\|_{H^1_{L, {\rm at}, 2, M}(\RR^n)}.
$$
A proof for $p < 1$ was shown by Duong and Li in \cite{DL},
and by Jiang and Yang in \cite{JY2}.

Given a function $f\in L^2({\Bbb R}^n)$, consider the non-tangential maximal
function associated to  the heat semigroup generated by the
 operator $L$,
\begin{align*}
f^*_L(x):=\sup\limits_{|y-x|<t}|e^{-t^2L}f(y)|.
\end{align*}
 We may define the spaces $H^p_{L,max}(\R), 0<p\leq 1 $
as the completion of $\{f \in L^2(\R): \|f^*_L\|_{L^p(\R)}<
\infty\}$ with respect to $L^p$-norm of the non-tangential maximal function; i.e.,
\begin{eqnarray*}
\big\|f\big\|_{H^p_{L,max}(\R)}:=\big\|f^*_L\big\|_{L^p(\R)}.
\end{eqnarray*}
It can be verified (see \cite{HLMMY, DL}) that for every $1<q\leq \infty$ and every
number $M>\frac{n}{2}(\frac{1}{p}-1)$, any $(p,q,M)$-atom $a$ is in $ H^p_{L,max}(\R)$
  and so the following
continuous inclusions hold:
$$
H^p_{L, {\rm at}, q, M}(\RR^n)\subseteq  H^p_{L,max}(\R).
$$

A natural question  is to show   the following
continuous inclusion:   $ H^p_{L,max}(\R)\subseteq
H^p_{L, {\rm at}, q, M}(\RR^n)$.
It is known that
   the inclusion $ H^p_{L,max}(\R)\subseteq
H^p_{L, {\rm at}, q, M}(\RR^n)$ holds  for some operators including
 Schr\"odinger operators   with nonnegative potentials  (see for example, \cite{DZ, DL, HLMMY}).
However,
 this question is still open  assuming merely  that an operator $L$ satisfies  ${\bf (H1)}$-${\bf (H2)}$.
 The aim of this article  is to give an affirmative answer to this
question to  get an
atomic decomposition  directly  from $ H^p_{L,max}(\R)$. We have  the following result.
 \begin{theorem}\label{th1.1}  Suppose  that an operator $L$ satisfies  ${\bf (H1)}$-${\bf (H2)}$.
Fix $0<p\leq 1$ and $M> {n\over 2}({1\over p}-1)$. Then the  following three conditions are equivalent:
\begin{itemize}
\item[(i)] $f\in H^p_{L, {\rm at},   \infty, M}(\R);$ \\[1pt]

\item[(ii)] Given    $\alpha>0$,
$\varphi^{\ast}_{L, \alpha}f=\sup\limits_{|y-x|<\alpha t}|\varphi(t\sqrt{L})f(y)| \in L^p(\RR^n)$
for some even  function $\varphi\in{\mathscr S}(\RR)$, $ \varphi(0)=1;$\\[1pt]

\item[(iii)]  $G^{\ast}_{L}(f)= \sup\limits_{\varphi\in {\mathscr A}}
\sup\limits_{|y-x|<  t}|\varphi(t\sqrt{L})f(y)|\in L^p(\RR^n)$,
where
$$
{\mathscr A}=\left\{ \varphi\in{\mathscr S}(\RR):   {\rm even \ functions \ with \ }  \varphi(0)\not=0,
\int_{\RR}(1+|x|)^{N} \sum_{k\leq N} \Big|{d^k\over dx^k} \varphi(x)\Big|^2dx\leq 1\right\},
$$
where $N$ is a large number depending only on $p$ and $n$.
 \end{itemize}
\end{theorem}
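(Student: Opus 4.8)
The plan is to prove the cycle of implications $(iii)\Rightarrow(ii)\Rightarrow(i)\Rightarrow(iii)$, with the bulk of the work in the implication $(ii)\Rightarrow(i)$, which is the genuinely new content. The implications $(iii)\Rightarrow(ii)$ and $(i)\Rightarrow(iii)$ are comparatively soft. For $(iii)\Rightarrow(ii)$, one fixes a single even Schwartz function $\varphi$ with $\varphi(0)=1$; after normalizing it (multiplying by a constant) so that it lies in the class $\mathscr A$, and absorbing the dilation factor $\alpha$ by rescaling the aperture inside the supremum, one gets $\varphi^{\ast}_{L,\alpha}f\lesssim G^{\ast}_L(f)$ pointwise, hence the $L^p$ bound. (A standard aperture-change argument, using the doubling property of Lebesgue measure implied by (GE), lets one pass between apertures $\alpha t$ and $t$ at the cost of a constant in $L^p$.) For $(i)\Rightarrow(iii)$, it suffices by the atomic decomposition and the (quasi-)subadditivity of $\|\cdot\|_{L^p}^p$ for $p\le 1$ to show that $\|G^{\ast}_L(a)\|_{L^p}\lesssim 1$ uniformly over $(p,\infty,M)$-atoms $a$ with $M>\frac n2(\frac1p-1)$; this is done by the usual split into the local part $4B$ (where one uses $L^2$-boundedness of the maximal operator $G^{\ast}_L$, which follows from the $L^2$-boundedness of the Hardy--Littlewood maximal function together with finite propagation / Gaussian bounds for the class $\mathscr A$, and Hölder) and the tails $2^{j+1}B\setminus 2^jB$, where one writes $a=L^Mb$ and exploits the time-decay of $(t^2L)^M e^{-t^2L}$-type kernels — here the condition $M>\frac n2(\frac1p-1)$ is exactly what makes $\sum_j 2^{jn(1-p/2)}\cdot 2^{-2jMp}\cdot(\text{atom bound})$ summable. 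These are by now routine adaptations of arguments in \cite{HLMMY, DL}.

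The heart of the paper is $(ii)\Rightarrow(i)$: starting only from the hypothesis that one nontangential heat-type maximal function $\varphi^{\ast}_{L,\alpha}f\in L^p$, produce an atomic $(p,\infty,M)$-decomposition. The approach, following the Calderón-type method indicated in the abstract, is a Calderón--Zygmund-style stopping-time decomposition adapted to the level sets of the maximal function, performed simultaneously at all scales via the reproducing formula
$$
f=c_\Psi\int_0^\infty \Psi(t\sqrt L)\,(t^2 L)\,e^{-t^2L}f\,\frac{dt}{t}
$$
(valid on $L^2$ by the spectral theorem for a suitable even $\Psi\in\mathscr S$ with $\Psi(0)=0$). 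First I would fix, for each $k\in\mathbb Z$, the open set $O_k=\{x:\varphi^{\ast}_{L,\alpha}f(x)>2^k\}$, which has finite measure, and take a Whitney decomposition $O_k=\bigcup_i Q_{k,i}$ into dyadic cubes (or balls) with controlled overlap and $\mathrm{diam}(Q_{k,i})\approx \mathrm{dist}(Q_{k,i},O_k^c)$. The Calderón reproducing integral is then partitioned according to which Whitney cube the point $(y,t)$ in the upper half-space belongs to (using a tent-type region $\widehat{Q_{k,i}}$ over each Whitney cube), producing a candidate decomposition $f=\sum_{k,i}\lambda_{k,i}a_{k,i}$ where $a_{k,i}$ is, up to normalization, $\int_{\widehat{Q_{k,i}}}\Psi(t\sqrt L)\,[(t^2L)e^{-t^2L}f](y)\,\cdots$. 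One must verify: (a) each $a_{k,i}$ is (a fixed multiple of) a $(p,\infty,M)$-atom — i.e. it has the form $L^M b$ with $b,Lb,\dots,L^Mb$ all supported in a fixed dilate of $Q_{k,i}$ and satisfying the size bounds in Definition~\ref{def1.2}; (b) $\sum_{k,i}|\lambda_{k,i}|^p\lesssim\|\varphi^{\ast}_{L,\alpha}f\|_{L^p}^p$; and (c) the series converges in $L^2$.

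For (a), the support property is not automatic — $\Psi(t\sqrt L)$ is not compactly supported — so I would first replace $\Psi(t\sqrt L)$ by $\psi(t\sqrt L)$ where $\psi$ is the Fourier transform of a bump function, so that by finite propagation speed for $\cos(t\sqrt L)$ (a consequence of (H1)--(H2), via the standard transmutation argument) $\psi(t\sqrt L)$ has kernel supported in $\{|x-y|\le t\}$; integrating $t$ only over $t\lesssim\mathrm{diam}(Q_{k,i})$ inside the tent then confines everything to a fixed dilate of the Whitney cube, and $b_{k,i}:=L^{-M}\!\int \psi(t\sqrt L)[\cdots]$ is defined by pulling $L^{-M}$ inside via $\psi(\lambda)=\lambda^{2M}\widetilde\psi(\lambda)$. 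The size bounds in Definition~\ref{def1.2}(iii) reduce, after duality against $L^{q'}=L^1$ test functions, to pointwise control of $[(t^2L)e^{-t^2L}f](y)$ on the tent $\widehat{Q_{k,i}}$ by $2^k$ — and this is precisely where the maximal function hypothesis enters: for $(y,t)\in\widehat{Q_{k,i}}$ one has $y$ at bounded "nontangential distance" from $O_{k-1}^c$, so $\varphi^{\ast}_{L,\alpha}f\le 2^{k-1}$ at a nearby point, and one must convert this control of $e^{-t^2L}f$ into control of $(t^2L)e^{-t^2L}f=-\frac t2\partial_t e^{-t^2L}f$ — a Cauchy-integral / Caccioppoli-type estimate in $t$, using the analyticity of the semigroup and Gaussian bounds. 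Then (b) follows by the usual argument: $|\lambda_{k,i}|\lesssim 2^k|Q_{k,i}|^{1/p}$, and $\sum_{k,i}2^{kp}|Q_{k,i}|\lesssim\sum_k 2^{kp}|O_k|\lesssim\|\varphi^{\ast}_{L,\alpha}f\|_{L^p}^p$ since $\sum_i|Q_{k,i}|\lesssim|O_k|$. The main obstacle, and the step I expect to absorb most of the technical effort, is precisely this passage from the nontangential control of $e^{-t^2L}f$ (which is what the hypothesis gives) to the pointwise and $L^2$-averaged control of $(t^2L)e^{-t^2L}f$ on Whitney tents needed to make the $a_{k,i}$ genuine atoms and to get $L^2$-convergence — together with the bookkeeping needed to handle the overlap of tents at different levels $k$ and to ensure the "error" terms left over from the truncation of the reproducing formula (the contribution from large $t$ and from the parts of the upper half-space not covered by any tent) vanish. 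One would handle convergence in $L^2$ by a limiting argument: first establish the decomposition for $f\in L^2$ with $\varphi^{\ast}_{L,\alpha}f\in L^p$ by truncating the reproducing integral to $\varepsilon<t<R$, proving uniform $L^2$ and atomic bounds, and passing to the limit; this is routine once (a) and (b) are in hand.
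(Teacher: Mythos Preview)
Your overall architecture is right, and the implications $(iii)\Rightarrow(ii)$ and $(i)\Rightarrow(iii)$ are handled correctly. But in $(ii)\Rightarrow(i)$ there are two genuine gaps.

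First, your plan to pass from nontangential control of $e^{-t^2L}f$ to pointwise control of $(t^2L)e^{-t^2L}f$ via a ``Cauchy-integral/Caccioppoli-type estimate'' does not work: Caccioppoli estimates yield only $L^2$-averages, not pointwise bounds, and a Cauchy integral representation of $-\tfrac{t}{2}\partial_t e^{-t^2L}f$ would require complex times, which the real-variable maximal hypothesis does not control. The paper's route is different and does not attempt any such pointwise conversion. Instead it first proves (Proposition~\ref{le3.1}) that all nontangential maximal functions $\varphi^{\ast}_{L,\alpha}f$ with $\varphi\in\mathscr S(\RR)$ even, $\varphi(0)=1$, have comparable $L^p$-norms; the proof of that proposition in fact controls $\psi^{\ast}_{L,1}f$ for any even $\psi\in\mathscr S(\RR)$ with $\psi(0)=0$ as well. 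One then \emph{defines} the level sets using the larger maximal function ${\mathscr M}_Lf(x)=\sup_{|x-y|<5\sqrt n\, t}\big(|t^2Le^{-t^2L}f(y)|+|\eta(t\sqrt L)f(y)|\big)$, so that on $T_{ij}\subset(\widehat{O_{i+1}})^c$ one has both $|t^2Le^{-t^2L}f(y)|\le 2^{i+1}$ and $|\eta(t\sqrt L)f(y)|\le 2^{i+1}$ directly, with no conversion needed.

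Second, even granting pointwise control of $|t^2Le^{-t^2L}f|\lesssim 2^i$ on the tent, your claimed reduction of the size bounds to this estimate only works for $k=0,\dots,M-1$: there one has an extra factor $t^{2(M-k)}$ and $\int_0^{c\ell(Q_{ij})} t^{2(M-k)}\,dt/t<\infty$. For $k=M$ the integral $\int_0^{c\ell(Q_{ij})} dt/t$ diverges, so the $L^\infty$-bound on $a_{ij}=L^M b_{ij}$ needs a separate idea. This is where Calder\'on's geometric trick enters, and your proposal does not touch it. The paper uses \emph{tilted} Whitney regions $\tilde Q_{ij}=\{(y,t):y+3t\bar e\in Q_{ij}\}$ rather than vertical tents, sets $T_{ij}=\tilde Q_{ij}\cap\widehat{O_i}\setminus\widehat{O_{i+1}}$, and proves that for each fixed $x$ the function $t\mapsto K_{\Psi(t\sqrt L)}(x,y)\chi_{T_{ij}}(y,t)$ is either identically $0$ or identically $K_{\Psi(t\sqrt L)}(x,y)$ on each of at most $2n+3$ complementary $t$-intervals, the exceptional intervals having bounded logarithmic length. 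On the ``full'' intervals one uses $\int_a^b \Psi(t\sqrt L)(t^2L)e^{-t^2L}f\,dt/t=\eta(a\sqrt L)f-\eta(b\sqrt L)f$, bounded by $2^{i+1}$ thanks to the $\eta$-term in ${\mathscr M}_L$; on the short intervals one integrates $dt/t$ directly. Without this all-or-nothing structure, there is no way to get the $L^\infty$-bound on the top-order piece.
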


 \medskip
 We should mention that using the theory of tent spaces,
 a  $(p, 2, M)$-atomic decomposition of
 the   Hardy space $H^p_{L,S}({\mathbb R^n})$ in terms of area  functions was  given
 in \cite{DL, HLMMY}.
In this article, we shall use    a different argument to build a $(p, \infty, M)$-atomic decomposition  of
 the   Hardy spaces $H^p_{L, max}({\mathbb R^n})$ in terms of maximal functions.
Our proof is  based on a modification of technique due to A. Calder\'on \cite{C},
 where a decomposition  of the function $ F(x,t)=f\ast \varphi_t(x)$ associated with the distribution $f$
 was given, and convolution operation of the function $F$ played an important role in the proof.
  In our setting,
  there is, however,
  no analogue of
convolution operation of the function  $ t^2L e^{-t^2L}f(x)$, we have to modify Calder\'on's
construction and
 the geometry   is conducting the
analysis (see Figure 1 in Section 3).
On the other hand, we do not assume that the heat kernel $p_t(x,y)$
	  satisfy the standard regularity condition, thus standard techniques of
Calder\'on--Zygmund theory (\cite{CT, St2}) are not applicable. The
lacking of smoothness of the kernel
will be overcome in Proposition~\ref{le3.1} below by using
some estimates on heat kernel bounds,
finite propagation speed of solutions to the wave equations and
spectral theory of non-negative self-adjoint operators.

Throughout, the letter ``$c$"  and ``$C$" will denote (possibly
different) constants  that are independent of the essential
variables.

\medskip

\bigskip

\section{Preliminaries}
\setcounter{equation}{0}

Recall that,  if $L$ is a nonnegative,
self-adjoint operator on $L^2({\Bbb R}^n)$, and $E_L(\lambda)$ denotes a spectral
decomposition associated with $L$, then for every bounded Borel function
$F:[0,\infty)\to{\Bbb C}$, one defines the operator
$F(L): L^2({\Bbb R}^n)\to L^2({\Bbb R}^n)$ by the formula
\begin{align}\label{e2.1}
F(L):=\int_0^{\infty}F(\lambda) \, {\rm d}E_L(\lambda).
\end{align}
 In particular,
the  operator $ \cos(t\sqrt{L})$  is then well-defined  on
$L^2({\Bbb R}^n)$. Moreover, it follows from Theorem 3.4 of
\cite{CS}
  that
 the integral kernel
 $K_{\cos(t\sqrt{L})}$ of $\cos(t\sqrt{L})$ satisfies
\begin{align}
 {\rm supp}K_{\cos(t\sqrt{L})}\subseteq
 \bigl\{(x,y)\in {\Bbb R}^n\times {\Bbb R}^n: |x-y|\leq   t\bigr\}.
 \label{e2.2}
 \end{align}
  By the Fourier inversion formula, whenever $F$ is
an even bounded Borel function with the Fourier transform of $F$,
$\hat{F} \in L^1(\mathbb{R})$, we can  write $F(\sqrt{L})$ in terms
of $\cos(t\sqrt{L})$. Concretely, by recalling (\ref{e2.1}) we have
$$
F(\sqrt{L})=(2\pi)^{-1}\int_{-\infty}^{\infty}{\hat
F}(t)\cos(t\sqrt{L}) \, {\rm d}t,
$$
which, when combined with (\ref{e2.2}), gives
\begin{align}\label{e2.3}
K_{F(\sqrt{L})}(x,y)=(2\pi)^{-1}\int_{|t|\geq  |x-y|}{\hat F}(t)
K_{\cos(t\sqrt{L})} (x,y) \, {\rm d}t.
\end{align}

\begin{lemma}\label{le2.1}\,  Let $\varphi\in C^{\infty}_0(\mathbb R)$ be
even, $\mbox{supp}\,\varphi \subset (-1, 1)$. Let $\Phi$ denote the Fourier transform of
$\varphi$. Then for every $\kappa=0,1,2,\dots$, and for every $t>0$,
the kernel $K_{(t^2L)^{\kappa}\Phi(t\sqrt{L})}(x,y)$ of the operator
$(t^2L)^{\kappa}\Phi(t\sqrt{L})$ which was defined by the spectral theory, satisfies
\begin{eqnarray}\label{e2.51}
{\rm supp}\ \! K_{(t^2L)^{\kappa}\Phi(t\sqrt{L})}
\subseteq \big\{(x,y)\in \RN\times \RN: |x-y|\leq t\big\}
\end{eqnarray}
 and
\begin{eqnarray}\label{e2.61}
\big|K_{(t^2L)^{\kappa}\Phi(t\sqrt{L})}(x,y)\big|
\leq C  \, t^{-n}
\end{eqnarray}
for all  $x,y\in \RN.$
\end{lemma}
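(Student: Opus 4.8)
The plan is to reduce everything to the symbol $\psi(\lambda):=\lambda^{2\kappa}\Phi(\lambda)$. By the spectral theorem $(t^2L)^{\kappa}\Phi(t\sqrt L)=\psi(t\sqrt L)$ for each $t>0$. Since $\varphi\in C_0^\infty(\RR)$ is even, its Fourier transform $\Phi=\widehat\varphi$ lies in $\mathscr S(\RR)$ and is even, so $\psi$ is an even Schwartz function; moreover the Fourier inversion formula gives $\widehat\psi=(-1)^{\kappa}2\pi\,\varphi^{(2\kappa)}$, which is supported in $(-1,1)$. Writing $F_t(\lambda):=\psi(t\lambda)$, the function $F_t$ is even and bounded, and $\widehat{F_t}(\tau)=t^{-1}\widehat\psi(\tau/t)$ is continuous, lies in $L^1(\RR)$, and is supported in $(-t,t)$. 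Applying the representation formula (\ref{e2.3}) to $F=F_t$ we get
\[
K_{(t^2L)^{\kappa}\Phi(t\sqrt L)}(x,y)=(2\pi)^{-1}\int_{|\tau|\ge|x-y|}\widehat{F_t}(\tau)\,K_{\cos(\tau\sqrt L)}(x,y)\,d\tau ,
\]
and when $|x-y|\ge t$ the set $\{|\tau|\ge|x-y|\}$ is disjoint from ${\rm supp}\,\widehat{F_t}\subseteq(-t,t)$, so the right-hand side vanishes; this proves (\ref{e2.51}).

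The substance of the lemma is the pointwise bound (\ref{e2.61}), and the idea is to use that $\psi$ is Schwartz to absorb polynomial weights, thereby passing from the ($L^2$-bounded) functional calculus to a genuine kernel bound. Fix an integer $m>n/2$ and factor, within the functional calculus of $L$,
\[
\psi(t\sqrt L)=(I+t^2L)^{-m}\,B_t\,(I+t^2L)^{-m},\qquad B_t:=(I+t^2L)^{2m}\psi(t\sqrt L).
\]
The spectral symbol of $B_t$ is $(1+t^2\lambda^2)^{2m}(t\lambda)^{2\kappa}\Phi(t\lambda)$, whose supremum over $\lambda\ge0$ equals $C_0:=\sup_{u\ge0}(1+u^2)^{2m}u^{2\kappa}|\Phi(u)|$, a finite constant (because $\Phi\in\mathscr S(\RR)$) independent of $t$, so $\|B_t\|_{L^2\to L^2}\le C_0$. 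On the other hand, the subordination identity $(I+t^2L)^{-m}=\Gamma(m)^{-1}\int_0^\infty s^{m-1}e^{-s}e^{-st^2L}\,ds$ together with the Gaussian upper bound (GE) for $p_t$ shows that the kernel of $(I+t^2L)^{-m}$ obeys $|K_{(I+t^2L)^{-m}}(x,y)|\le C\,t^{-n}\big(1+|x-y|/t\big)^{-2k}$ for every $k\ge0$ (the hypothesis $m>n/2$ guarantees convergence of the $s$-integral near $0$ and produces the decay), and taking $2k>n$ we obtain $\big\|K_{(I+t^2L)^{-m}}(\cdot,y)\big\|_{L^2(\RN)}\le C\,t^{-n/2}$ uniformly in $y\in\RN$ and $t>0$. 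Now, by the factorization above and the self-adjointness of $(I+t^2L)^{-m}$ (moving this operator across the $L^2$ pairing twice), the kernel of $\psi(t\sqrt L)$ is given by
\[
K_{\psi(t\sqrt L)}(x,y)=\big\langle B_t\,K_{(I+t^2L)^{-m}}(\cdot,y),\ K_{(I+t^2L)^{-m}}(\cdot,x)\big\rangle_{L^2(\RN)} ,
\]
so that Cauchy--Schwarz yields $\big|K_{\psi(t\sqrt L)}(x,y)\big|\le\|B_t\|_{L^2\to L^2}\big\|K_{(I+t^2L)^{-m}}(\cdot,y)\big\|_{2}\big\|K_{(I+t^2L)^{-m}}(\cdot,x)\big\|_{2}\le C\,t^{-n}$, which is (\ref{e2.61}).

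The main obstacle is that $K_{\cos(t\sqrt L)}$ is only a distribution, so (\ref{e2.3}) by itself cannot deliver a pointwise estimate; the device of sandwiching $\psi(t\sqrt L)$ between two copies of the smoothing operator $(I+t^2L)^{-m}$---legitimate precisely because $\psi$ is Schwartz and hence absorbs the weight $(1+u^2)^{2m}$---is what converts the bounded functional calculus into the uniform $t^{-n}$ kernel bound, while the compact Fourier support of $\psi$ (that is, the finite propagation speed encoded in (\ref{e2.2})) handles the region $|x-y|>t$ separately. The remaining points---the scaling identity for $\widehat{F_t}$, the elementary estimate of $\int_0^\infty s^{m-1-n/2}e^{-s}e^{-r^2/s}\,ds$ giving the decay in $r=|x-y|/t$, and the measurability of the kernels involved (the explicit formula for $K_{\psi(t\sqrt L)}$ in the previous paragraph already exhibits one)---are routine.
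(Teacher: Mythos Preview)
Your proof is correct. The paper does not actually prove Lemma~\ref{le2.1}; it merely refers to \cite{GY} and \cite{HLMMY}, so there is nothing to compare against directly. That said, your argument is the standard one in those references, and in fact the device you use for the pointwise bound---factoring $\psi(t\sqrt L)=(I+t^2L)^{-m}B_t(I+t^2L)^{-m}$ with $B_t$ uniformly bounded on $L^2$ and $(I+t^2L)^{-m}$ bounded $L^2\to L^\infty$ with norm $Ct^{-n/2}$---is exactly the mechanism the paper itself invokes a few lines later to prove (\ref{e2.7}) in Lemma~\ref{prop2.3}.

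One minor remark: your intermediate claim that $|K_{(I+t^2L)^{-m}}(x,y)|\le C t^{-n}(1+|x-y|/t)^{-2k}$ for every $k\ge 0$ is true (indeed one gets exponential decay $e^{-c|x-y|/t}$, as the paper records in the proof of Lemma~\ref{prop2.3}), but it is more than you need. The conclusion $\|K_{(I+t^2L)^{-m}}(\cdot,y)\|_{L^2}\le Ct^{-n/2}$ follows directly from $\|(I+t^2L)^{-m}\|_{L^1\to L^2}=\|(I+t^2L)^{-m}\|_{L^2\to L^\infty}\le Ct^{-n/2}$ (valid already for $m>n/4$), without any pointwise kernel estimate for the resolvent.
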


\smallskip

\begin{proof}  For the proof,  we refer it to \cite{GY} and \cite{HLMMY}.
\end{proof}

\begin{lemma}\label{le2.2}    Assume that an operator $L$ satisfies  ${\bf (H1)}$-${\bf (H2)}$.
 Let  $R>0, s>0$.
Then for any $\epsilon>0$, there exists a constant
$C=C(s, \epsilon)$ such that
\begin{eqnarray*}
\int_{\RR^n} \big|K_{F(\sqrt {L})}(x,y)\big|^2 \big(1+R|x-y|\big)^{s} dy\leq
CR^n
 \|  F(R\cdot)\|^2_{C^{{s\over 2} +\epsilon}(\mathbb R)}
\end{eqnarray*}
for all Borel functions $F$ such that supp $F\subseteq [0, R].$
\end{lemma}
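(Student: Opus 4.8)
The plan is to use the finite-propagation-speed identity \eqref{e2.3} together with a standard Sobolev/Plancherel argument, in the style of the Davies--Mauceri--Meda heat-kernel estimates. First I would reduce to the case $R=1$ by rescaling: replacing $L$ by $R^2 L$ (equivalently $\sqrt L$ by $R\sqrt L$) and using that $L$ still satisfies \textbf{(H1)}--\textbf{(H2)} with the same constants up to the dilation, so it suffices to prove
\begin{equation*}
\int_{\RR^n}\big|K_{F(\sqrt L)}(x,y)\big|^2(1+|x-y|)^s\,dy\le C\|F\|_{C^{s/2+\epsilon}}^2
\end{equation*}
for $F$ supported in $[0,1]$. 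Next, fix $x$ and estimate $\int_{|x-y|\le 1}$ trivially: on this region $(1+|x-y|)^s\le 2^s$, and $\int|K_{F(\sqrt L)}(x,y)|^2\,dy=\|K_{F(\sqrt L)}(x,\cdot)\|_2^2$, which by the spectral theorem and the Gaussian bound on $p_t$ (used to control $\|F(\sqrt L)\|_{L^1\to L^2}$ type quantities, or more simply via $F(\sqrt L)=\big(F\,e^{\lambda}\big)(\sqrt L)e^{-L}$ and the kernel bound on $e^{-L}$) is bounded by $C\|F\|_\infty^2\le C\|F\|_{C^{s/2+\epsilon}}^2$.

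The main work is the region $|x-y|\ge 1$. Decompose dyadically: for $j\ge 0$ set $A_j=\{y: 2^j\le |x-y|<2^{j+1}\}$, so $(1+|x-y|)^s\lesssim 2^{js}$ there, and it remains to show $\sum_j 2^{js}\int_{A_j}|K_{F(\sqrt L)}(x,y)|^2\,dy\lesssim\|F\|_{C^{s/2+\epsilon}}^2$. Here I would split $F$ using a smooth partition of unity in the Fourier variable adapted to scale $2^j$: write $\widehat F=\sum_j \widehat F\cdot\psi_j$ is not quite right since $F$ itself is compactly supported in frequency; instead I follow the usual trick of exploiting \eqref{e2.3}, namely $K_{F(\sqrt L)}(x,y)=(2\pi)^{-1}\int_{|t|\ge|x-y|}\widehat F(t)K_{\cos(t\sqrt L)}(x,y)\,dt$. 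Thus on $A_j$ only the tail $|t|\ge 2^j$ of $\widehat F$ contributes. Choosing $G_j(\lambda)=F(\lambda)\eta(2^{-j}\lambda)$ for a suitable cutoff forces, by an integration-by-parts (nonstationary phase) estimate $|\widehat{G_j}(t)|\lesssim (1+|t|)^{-L_0}\|F\|_{C^{L_0}}\cdot 2^{-j(\dots)}$; combined with the $L^2$-boundedness of $\cos(t\sqrt L)$ and the local-in-$y$ support \eqref{e2.2} restricting the $y$-integral over $A_j$ to $|t|\sim 2^j$, one gets $\int_{A_j}|K_{F(\sqrt L)}(x,y)|^2\,dy\lesssim 2^{jn}\cdot 2^{-j s}\cdot 2^{-2j\epsilon}\|F\|_{C^{s/2+\epsilon}}^2$, where the $2^{jn}$ comes from $|A_j|\sim 2^{jn}$ times the $t^{-n}$-type normalization, and the decay $2^{-js-2j\epsilon}$ comes from the smoothness $s/2+\epsilon$ of $F$ traded through Plancherel in $t$. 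Summing the geometric series in $j$ (the $2^{-2j\epsilon}$ beats the $2^{js}\cdot 2^{jn}\cdot 2^{-js}=2^{jn}$... so in fact one needs the smoothness order to absorb both $n$ and $s$; this is why the hypothesis is $C^{s/2+\epsilon}$ on $F(R\cdot)$ together with the factor $R^n$ on the right) closes the estimate.

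The step I expect to be the main obstacle is the precise bookkeeping of exponents in the dyadic piece: one must verify that $s/2$ derivatives on $F$, converted via Plancherel $\|(1+|t|)^{s/2+\epsilon}\widehat{G_j}\|_{L^2_t}\lesssim\|G_j\|_{C^{s/2+\epsilon}}$, yield exactly the decay $2^{-j s}$ needed to defeat the volume growth $2^{jn}$ and the weight $2^{js}$ simultaneously, with $\epsilon>0$ providing the summability. This is a classical computation (it appears, e.g., in Davies--Mauceri--Meda / Sikora--Wright and underlies the finite-speed-of-propagation Sobolev embedding), and I would cite or reproduce it; there is no conceptual difficulty beyond careful tracking of how the $R^n$ on the right-hand side accounts for the $L^2$-normalization of the kernel after rescaling.
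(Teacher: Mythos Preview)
The paper does not prove this lemma; it simply refers the reader to Lemma~7.18 of \cite{O} (see also \cite{DOS}). Your outline --- rescale to $R=1$, treat $|x-y|\le 1$ trivially, decompose $\{|x-y|\ge 1\}$ into dyadic annuli $A_j$, and exploit finite propagation speed on each $A_j$ --- is exactly the method used in those references, so strategically you are in the right place.

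The genuine gap is the exponent accounting on $A_j$, which you yourself flag without resolving. Your claimed bound
\[
\int_{A_j}\big|K_{F(\sqrt L)}(x,y)\big|^2\,dy\lesssim 2^{jn}\cdot 2^{-js}\cdot 2^{-2j\epsilon}\|F\|_{C^{s/2+\epsilon}}^2
\]
leaves, after multiplication by the weight $2^{js}$, a divergent series $\sum_j 2^{j(n-2\epsilon)}$, and your attempted rescue via the $R^n$ on the right-hand side is a red herring: that factor is produced by the rescaling $L\mapsto R^{-2}L$ in the very first step and is already spent once you set $R=1$. The spurious term is the $2^{jn}$. The correct mechanism is this: from the Gaussian bound one has the \emph{unweighted} kernel estimate
\[
\int_{\RR^n}\big|K_{G(\sqrt L)}(x,y)\big|^2\,dy\le C\|G\|_\infty^2
\]
for any bounded $G$ supported near $[0,1]$ (write $G(\sqrt L)=(Ge^{\lambda^2})(\sqrt L)e^{-L}$ and use $\|e^{-L}\|_{L^2\to L^\infty}\le C$). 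On $A_j$, finite propagation \eqref{e2.3} lets you replace $F$ by $G_j:=F-F\ast\rho_j$, where $\rho_j(\lambda)=2^j\check\theta(2^j\lambda)$ for a fixed even $\theta\in C_c^\infty([-1,1])$ with $\theta(0)=1$; then $\widehat{(F-G_j)}$ is supported in $[-2^j,2^j]$, so $K_{F}(x,y)=K_{G_j}(x,y)$ on $A_j$, while $G_j$ is essentially supported near $[0,1]$ and satisfies $\|G_j\|_\infty\le C\,2^{-j(s/2+\epsilon)}\|F\|_{C^{s/2+\epsilon}}$ by the standard modulus-of-continuity bound (with vanishing moments of $\rho_j$ when $s/2+\epsilon>1$). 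Hence
\[
\int_{A_j}\big|K_{F(\sqrt L)}\big|^2\,dy\le C\,2^{-j(s+2\epsilon)}\|F\|_{C^{s/2+\epsilon}}^2,
\]
and now $\sum_{j\ge 0}2^{js}\cdot 2^{-j(s+2\epsilon)}<\infty$. No volume factor $2^{jn}$ ever enters.
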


\begin{proof}
For the proof, we refer the reader to Lemma 7.18, \cite{O}. See also \cite{DOS}.
\end{proof}

Next we show the following result, which will be useful in the sequel.

\begin{lemma}\label{prop2.3} Assume that an operator $L$ satisfies  ${\bf (H1)}$-${\bf (H2)}$.
Let $\psi_i \in {\mathscr S}(\mathbb R) $ be even functions, $\psi_i(0)=0, i=1,2$.
   Then for every $\eta>0$, there exists a positive constant $C=C({n, \eta, \psi_1, \psi_2})$
such that the kernel $K_{\psi_1(s\sqrt{L}) \psi_2(t\sqrt{L}) }(x,y)$ of $ \psi_1(s\sqrt{L}) \psi_2(t\sqrt{L})$
satisfies
\begin{eqnarray}\label{e2.5}
\big|K_{\psi_1(s\sqrt{L}) \psi_2(t\sqrt{L}) }(x,y)\big|
\leq  C\, \left({\min(s,t)\over \max(s,t)}\right)
  {\max(s,t)^\eta\over (\max(s,t)+ |x-y|)^{n+\eta}}
\end{eqnarray}
for all $t>0$ and $x, y\in \RN$.
\end{lemma}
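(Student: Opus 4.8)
The plan is to exploit the spectral decomposition together with the finite propagation speed property~(\ref{e2.2}) and the off-diagonal estimate of Lemma~\ref{le2.2}. By symmetry in the roles of $s$ and $t$ we may assume $t\le s$, so that $\min(s,t)=t$, $\max(s,t)=s$; the claimed bound becomes $|K_{\psi_1(s\sqrt L)\psi_2(t\sqrt L)}(x,y)|\le C(t/s)\,s^\eta/(s+|x-y|)^{n+\eta}$. First I would dispose of the ``$1+R|x-y|$'' machinery by writing $F(\lambda)=\psi_1(s\lambda)\psi_2(t\lambda)$ and noting this is a Schwartz function; since $\psi_2(0)=0$, we have $|\psi_2(t\lambda)|\lesssim \min(1,t\lambda)\le (t/s)^{?}\cdots$ — more precisely $|\psi_2(t\lambda)|\lesssim t\lambda$ near $0$ and decays rapidly, which is where the crucial factor $t/s$ will come from. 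To make this rigorous I would decompose $F$ dyadically in the spectral variable, writing $F=\sum_{j\in\mathbb Z}F_j$ with $F_j$ supported in $\lambda\sim 2^j/s$ (a Littlewood--Paley partition at scale $s^{-1}$), so that each $F_j$ is supported in $[0,R_j]$ with $R_j\sim 2^j/s$.

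Next, for each piece I would apply Lemma~\ref{le2.2} with $R=R_j$ and $s$ (the smoothness index in that lemma, not our scale parameter) chosen as $s=2(n+\eta)$, say, to obtain
\begin{eqnarray*}
\int_{\RR^n}\big|K_{F_j(\sqrt L)}(x,y)\big|^2\big(1+R_j|x-y|\big)^{2(n+\eta)}\,dy\le C R_j^n\,\|F_j(R_j\cdot)\|_{C^{n+\eta+\epsilon}}^2.
\end{eqnarray*}
The point is to bound $\|F_j(R_j\cdot)\|_{C^{n+\eta+\epsilon}}$: using that $\psi_1,\psi_2\in{\mathscr S}$ and $\psi_2(0)=0$, a direct estimate gives $\|F_j(R_j\cdot)\|_{C^{k}}\lesssim_N (t/s)\,2^j\,(1+2^j)^{-N}$ for the relevant $k$ and any $N$, where the factor $t/s$ is exactly the vanishing of $\psi_2$ at the origin cashed in at scale $2^j/s$, and the rapid decay in $2^j$ comes from the Schwartz decay of $\psi_1$ at $+\infty$. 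Combining, a Cauchy--Schwarz argument (integrating the $L^2$ bound against $(1+R_j|x-y|)^{-2(n+\eta)}$ and using $R_j^n\int(1+R_j|x-y|)^{-2(n+\eta)}dy \sim 1$... ) actually I would instead estimate the pointwise kernel directly: from the $L^2$ weighted bound, for any ball, $|K_{F_j(\sqrt L)}(x,y)|\lesssim R_j^{n/2}\|F_j(R_j\cdot)\|_{C^{\cdots}}(1+R_j|x-y|)^{-(n+\eta)}$ is not immediate pointwise, so the cleaner route is to write $K_{F_j(\sqrt L)}=K_{F_j(\sqrt L)\chi_j(\sqrt L)}$ for a slightly fattened cutoff $\chi_j$, use the reproducing/composition trick $K_{F_j}(x,y)=\int K_{F_j^{1/2}}(x,z)K_{F_j^{1/2}}(z,y)\,dz$ only if $F_j\ge0$; to avoid sign issues I would instead directly invoke the standard consequence of Lemma~\ref{le2.2} that for $F$ supported in $[0,R]$ one has the pointwise bound $|K_{F(\sqrt L)}(x,y)|\le CR^n\|F(R\cdot)\|_{C^{s/2+\epsilon}}(1+R|x-y|)^{-s/2}$, which follows from Lemma~\ref{le2.2} by a routine Cauchy--Schwarz over annuli (this is how it is used in \cite{DOS, HLMMY}).

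Granting that pointwise form, I get $|K_{F_j(\sqrt L)}(x,y)|\lesssim_N R_j^n (t/s)2^j(1+2^j)^{-N}(1+R_j|x-y|)^{-(n+\eta)}$. Summing over $j$: I split at $2^j\sim s/|x-y|$ if $|x-y|\le s$ (resp.\ handle all $j$ if $|x-y|>s$), and in each regime the geometric-type sum produces $(t/s)\cdot s^{-n}\cdot(1+|x-y|/s)^{-(n+\eta)}= (t/s)\,s^\eta(s+|x-y|)^{-n-\eta}$, which is exactly the claimed bound with $C=C(n,\eta,\psi_1,\psi_2)$. The main obstacle I anticipate is \emph{bookkeeping the three competing gains}: the factor $t/s$ (from $\psi_2(0)=0$), the decay in $2^j$ (from $\psi_1$ Schwartz, needed to sum the high-frequency tail), and the spatial decay $(1+R_j|x-y|)^{-(n+\eta)}$ (from Lemma~\ref{le2.2}, needed to sum the low-frequency part and to get the final $|x-y|$-decay), and verifying that one can choose the smoothness exponent in Lemma~\ref{le2.2} and the Schwartz-decay exponent $N$ large enough, depending only on $n$ and $\eta$, so that all three sums converge simultaneously. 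Everything else is routine once the dyadic decomposition and the pointwise form of Lemma~\ref{le2.2} are in place.
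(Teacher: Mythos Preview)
Your strategy---dyadic spectral decomposition plus Lemma~\ref{le2.2}---is the same as the paper's, and the high-frequency part ($2^j\gtrsim 1$) works exactly as you say: Schwartz decay of $\psi_1$ at infinity makes that tail summable with room to spare.  The problem is the \emph{low-frequency} sum $j\to-\infty$.  With your estimate $\|F_j(R_j\cdot)\|_{C^k}\lesssim (t/s)\,2^{j}(1+2^j)^{-N}$, the pieces $j<0$ contribute (for $|x-y|>s$) only
\[
\sum_{j<0} R_j^{\,n}\,(t/s)\,2^{j}\,(1+R_j|x-y|)^{-(n+\eta)}
\;\sim\;(t/s)\,\frac{s}{|x-y|^{\,n+1}},
\]
i.e.\ the claimed bound with $\eta=1$, not arbitrary $\eta>0$.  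Squeezing in the evenness and the additional vanishing $\psi_1(0)=0$ pushes this to a fixed finite exponent, but never to every $\eta$.  The reason is structural: when you apply Lemma~\ref{le2.2} to $F_j$ at its own scale $R_j=2^j/s$, the spatial decay you get is $(1+R_j|x-y|)^{-(n+\eta)}$, which is useless at the target scale $s$ once $R_j\ll 1/s$; the only compensation is the smallness of $\|F_j(R_j\cdot)\|_{C^k}$, and that is capped by the (finite) order of vanishing of the $\psi_i$ at $0$.

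The paper sidesteps this by \emph{not} resolving the low frequencies dyadically.  It first writes $\psi_1(s\sqrt L)\psi_2(t\sqrt L)=\frac{s}{t}\Psi(t\sqrt L)$ (absorbing the factor $s/t$ once and for all), then factors $\Psi(t\sqrt L)=F(t\sqrt L)\,(I+t^2L)^{-m}$ with $m>n/2$.  The resolvent kernel decays exponentially, so the full spatial weight $(1+|x-y|/t)^{n+\eta}$ is transferred to $\int|K_{F(t\sqrt L)}(x,z)|(1+|x-z|/t)^{n+\eta}dz$, and now the dyadic decomposition of $F$ is only over $\ell\ge 0$, with a single low-frequency piece $F^0$ handled at the fixed scale $1/t$ via Lemma~\ref{le2.2}.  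This is also what turns the weighted $L^2$ bound of Lemma~\ref{le2.2} into the pointwise estimate you need---it does \emph{not} follow from ``Cauchy--Schwarz over annuli'' alone.  Your argument is easily repaired by inserting this resolvent factorization and treating all frequencies $\lesssim 1/\max(s,t)$ as one block.
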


\begin{proof}  By   symmetry, it suffices to show that if $s\leq t$, then
\begin{eqnarray}\label{e2.6}
\big|K_{\psi_1(s\sqrt{L}) \psi_2(t\sqrt{L}) }(x,y)\big|
\leq  C\, \left({s\over t}\right)
  {t^\eta\over (t+ |x-y|)^{n+\eta}}.
\end{eqnarray}
To do this, we fix $s, t>0$ and let $\Psi(tx)={t\over s} \psi_1(sx)  \psi_2(tx)$,
and so $
\psi_1(s\sqrt{L}) \psi_2(t\sqrt{L}) ={s\over t} \Psi(t\sqrt{L}).
$
 Let us show that
\begin{eqnarray}\label{e2.7}
\big|K_{\Psi(t \sqrt{L})}(x,y)\big|
\leq  Ct^{-n}, \ \ \ \  x,y\in \RN.
\end{eqnarray}
Indeed,  for any $\kappa\in {\Bbb N}$, we have   the relationship
\begin{eqnarray}\label{e2.8}
  (I+ t^2L)^{-\kappa}={1\over  (\kappa-1)!} \int\limits_{0}^{\infty}e^{- ut^2L}e^{-u} u^{\kappa-1} du
\end{eqnarray}
  and so when $\kappa>n/4$,
 \begin{eqnarray*}
 \big\| (I+ t^2L)^{-\kappa} \big\|_{L^2(\RR^n)\rightarrow L^{\infty}(\RR^n)}\leq {1\over  (\kappa-1)!} \int\limits_{0}^{\infty}
 \big\| e^{- ut^2L}\big\|_{L^2(\RR^n)\rightarrow L^{\infty}(\RR^n)} e^{-u} u^{\kappa-1} du\leq Ct^{-n/2}.
 \end{eqnarray*}
Now
$ \big\| (I+ t^2L)^{-\kappa} \big\|_{L^1(\RR^n)\rightarrow L^{2}(\RR^n)}=
\big\| (I+ t^2L)^{-\kappa} \big\|_{L^2(\RR^n)\rightarrow L^{\infty}(\RR^n)}\leq
Ct^{-n/2}  $, and so
 \begin{eqnarray*}
 \big\|\Psi( t\sqrt{L})\big\|_{L^1(\R)\rightarrow L^{\infty}(\R)} \leq
 \big\| (I+ t^2L)^{2\kappa}\Psi( t\sqrt{L}) \big\|_{L^2(\RR^n)\rightarrow L^2(\RR^n)}
 \big\| (I+ t^2L)^{-\kappa} \big\|^2_{L^2(\RR^n)\rightarrow L^{\infty}(\RR^n)}.
 \end{eqnarray*}
  Since $\psi_1\in {\mathscr S}(\RR)$ and $\psi_1(0)=0$, we have that
  $(s\lambda)^{-1}\psi_1(s\lambda)=\int_0^1\psi'_1(s\lambda y) dy \in L^{\infty}(\RR)$, and then
the $L^2$ operator norm of the last term is equal to the $L^{\infty}(\RR)$
norm of the function $(1+ t^2|\lambda|)^{2m}  \Psi( t\sqrt{|\lambda|})=
[(s\sqrt{|\lambda|})^{-1}\psi_1(s\sqrt{|\lambda|})] [(1+ t^2|\lambda|)^{2m} (t\sqrt{|\lambda|})\psi_2(t\sqrt{|\lambda|})]
$ which is uniformly
bounded in $t>0$. This implies that (\ref{e2.7}) holds.

Next, we   write   $F (t\lambda)=\Psi(t\lambda)(1+t^2\lambda^2)^m$, where $m>n/2$. Then we have
 $\Psi( t\sqrt{L})=F(  t\sqrt{L})(1+ t^2L)^{-m} $.
From \eqref{e2.8}, it can be verified
that  for $m>n/2$, there exist some positive constants $C$ and $c$ such  that for every $t>0$,
the kernel $K_{(1+t^2L)^{-m}}(x,y)$ of the operator
$(1+t^2L)^{-m}$  satisfies
\begin{eqnarray*}
\big|K_{(1+t^2L)^{-m}}(x,y)\big|
\leq \frac{C}{t^{n} } \exp\Big(-{|x-y| \over
c\,t }\Big),
\end{eqnarray*}
which, in combination with
 $\big(1+{|x-y|\over t} \big) \leq  (1+{|x-z|\over t} )(1+{|y-z|\over t} )$, shows
\begin{eqnarray*}
 \Big|\Big(1+{|x-y|\over t }\Big)^{n+\eta}K_{\Psi(t\sqrt{L})}(x,y)\Big|
&&=  \Big(1+{|x-y| \over t}\Big)^{n+\eta}\Big|\int_{\RN}
K_{F( t\sqrt{L}) }(x,z) K_{ (1+ t^2L)^{-m}}(z,y)\, dz\Big|\\
&&\leq Ct^{-n}\int_{\RN}\big|K_{F( t\sqrt{L}) }(x,z)\big|\Big(1+{|x-z|\over t }\Big)^{n+\eta} dz.
\end{eqnarray*}
By symmetry, estimate \eqref{e2.6} will be proved  if we show that
\begin{eqnarray}\label{e2.9}
\int_{\RN}\big|K_{F(t\sqrt{L}) }(x,z)\big|\Big(1+{|x-z|\over t}\Big)^{n+\eta} dz\leq C.
\end{eqnarray}

Let   $\varphi\in C_c^{\infty}(0, \infty)$ be a non-negative function satisfying
    supp $\varphi \subseteq [{1\over 4}, 1]$ and   let $\varphi_0=1- \sum_{\ell=1}^{+\infty}\varphi(2^{-\ell}\lambda).$
 So,
   \begin{eqnarray*}
\varphi_0(\lambda)+\sum_{\ell=1}^{\infty}\varphi(2^{-\ell}\lambda) =1, \ \ \  \ \forall\,  \lambda >0.
\end{eqnarray*}
Let  $F^{0}(t\lambda)$ denote the function $\varphi_0(t\lambda)F(t\lambda)$  and for $\ell\geq 1 $
$
F^{\ell} (t\lambda):= \varphi(2^{-\ell}t\lambda)F(t\lambda).
$
From \eqref{e2.7}, the proof of \eqref{e2.9} reduces to estimate the following:
\begin{eqnarray}\label{e2.10}
\int_{\RN}\big|K_{F(t\sqrt{L}) }(x,z)\big|\Big(1+{|x-z|\over t}\Big)^{n+\eta} dz
&\leq & C+
 \int_{\RN}\big|K_{F^{0} (t\sqrt{L}) }(x,z)\big|\Big(1+{|x-z|\over t}\Big)^{n+\eta} dz\nonumber\\
&+ &
\sum_{\ell=1}^{\infty}\int_{|x-z|\geq t}\big|K_{F^{\ell} (t\sqrt{L}) }(x,z)\big|
 \Big({|x-z|\over t}\Big)^{n+\eta} dz\nonumber\\
&=:& C+ \sum_{\ell=0}^{\infty}I_{\ell}.
\end{eqnarray}
By Lemma~\ref{le2.2},
\begin{eqnarray*}
 I_0
& \leq& C t^{ n/2}\Big(\int_{\RN}\big|K_{F^{0} (t\sqrt{L}) }(x,z)
\big|^2\Big(1+{|x-z|\over t}\Big)^{3n+2\eta+1} dz\Big)^{1/2} \nonumber\\
&\leq &  C\|\delta_{1/t}F^{0}(t\cdot)\|_{C^{{3n\over 2}+2\eta+1}}.
\end{eqnarray*}
 Since $\psi_1\in {\mathscr S}(\RR)$ and $\psi_1(0)=0$, we have that
  $(s\lambda)^{-1}\psi_1(s\lambda)=\int_0^1\psi'_1(s\lambda y) dy \in {\mathscr S}(\RR)$. Then we have
\begin{eqnarray}\label{e2.11}
 I_0&\leq &
C\|\varphi_0(\lambda) \Psi(\lambda)(1+\lambda^2)^m\|_{C^{{3n\over 2}+2\eta+1}}\nonumber\\
&=&\|\varphi_0(\lambda) \int_0^1 \psi'_1(s\lambda y/t) dy [\lambda \psi_2(\lambda)(1+\lambda^2)^m]\|_{C^{{3n\over 2}+2\eta+1}}
\leq C.
\end{eqnarray}
For the term $I_{\ell}$, we use   Lemma~\ref{le2.2} again to   obtain
\begin{eqnarray*}
  I_{\ell}
& \leq& Ct^{ n/2}\Big( \int_{\RN}\big|K_{F^{\ell}(t\sqrt{L}) }(x,z)\big|^2
\Big({|x-z|\over t}\Big)^{3n+2\eta+1} dz\Big)^{1/2} \nonumber\\
  & \leq&Ct^{ n/2}
  2^{-\ell(3n+2\eta+1)/2}\Big( \int_{\RN}\big|K_{F^{\ell}(t\sqrt{L}) }(x,z)\big|^2
 \Big(1+{2^{\ell}|x-z|\over t}\Big)^{3n+2\eta+1} dz\Big)^{1/2} \nonumber\\
&\leq & C2^{-\ell(3n+2\eta+1)/2}2^{\ell n/2} \|\delta_{2^{\ell}/t}
F^{\ell}(t\cdot)\|_{C^{{3n\over 2}+2\eta+1}}.
\end{eqnarray*}
It can be verified that for $\psi_i\in {\mathscr S}(\RR), i=1,2$,
 \begin{eqnarray*}
 \|\varphi \delta_{2^{\ell}}F\|_{C^{{3n\over 2}+2\eta+1}}
&=&\|\varphi(\lambda) \int_0^1 \psi'_1(2^{\ell}s\lambda y/t) dy
[2^{\ell}\lambda \psi_2(2^{\ell}\lambda)(1+2^{2\ell}\lambda^2)^m]\|_{C^{{3n\over 2}+2\eta+1}}\\
& \leq&    C2^{ \ell({3\over 2}n +2\eta+1)} 2^{ -2n\ell},
\end{eqnarray*}
which gives
 \begin{eqnarray}\label{e2.12}
  \sum_{\ell=1}^{\infty}I_{\ell}
& \leq&   C \sum_{\ell=1}^{\infty} C2^{-\ell(3n+2\eta+1)/2}2^{\ell n/2}2^{ \ell({3\over 2}n +2\eta+1)} 2^{ -2n\ell}
\nonumber\\
&\leq & C \sum_{\ell=1}^{\infty} 2^{-n\ell }
 \leq   C .
\end{eqnarray}
Putting \eqref{e2.11} and \eqref{e2.12} into \eqref{e2.10},
estimate \eqref{e2.9} follows readily. The proof of Lemma~\ref{prop2.3} is complete.
\end{proof}

\medskip

\section{Proof of Theorem~\ref{th1.1}}
\setcounter{equation}{0}

The proof of Theorem~\ref{th1.1} follows the line of  (ii)$\Rightarrow$ (i) $\Rightarrow$ (iii)$\Rightarrow$ (ii).
The proof of    (i) $\Rightarrow$ (iii) will be  an adaptation of the proof of the earlier
known implication  of (i) $\Rightarrow$ (ii) (see \cite{HLMMY, DL}). Obviously,  (iii)$\Rightarrow$ (ii).
  The left of the proof of Theorem~\ref{th1.1}
is to show an implication  (ii) $\Rightarrow$ (i). To do this,   we first show the following result.

\begin{prop}\label{le3.1}
Let $0<p\leq 1$. Let  $L$  be  a non-negative self-adjoint operator   on $L^2(\RR^n)$ satisfying
Gaussian  estimates ${\rm (GE)}.$
Let $\varphi_i \in {\mathscr S}(\mathbb R)$ be even functions with $ \varphi_i(0)=1$
and $\alpha_i>0, i=1,2$.
Then there exists a constant $C=C(n, \varphi_1, \varphi_2, \alpha_1, \alpha_2)$ such that
for every $f\in L^2(\RR^n)$, the functions
 $\varphi^{\ast}_{i, L, \alpha}f=\sup\limits_{|y-x|<\alpha t}|\varphi_i(t\sqrt{L})f(y)|, i=1,2$,
 satisfy
\begin{eqnarray}\label{e4.1}
\left\|\varphi^{\ast}_{1, L, \alpha_1}f\right\|_{L^p(\RR^n)}\leq
C\left\|\varphi^{\ast}_{2, L, \alpha_2}f\right\|_{L^p(\RR^n)}.
\end{eqnarray}
As a consequence, for any  $\varphi \in {\mathscr S}(\mathbb R)$ be even function with $\varphi(0)=1$,
$$
 C^{-1}\|f^{\ast}_L\|_{L^p(\RR^n)}\leq  \left\|\varphi^{\ast}_{L, \alpha}f\right\|_{L^p(\RR^n)}\leq
 C\|f^{\ast}_L\|_{L^p(\RR^n)}, \ \ \  \alpha>0.
 $$
\end{prop}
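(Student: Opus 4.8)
The plan is to prove the comparison \eqref{e4.1} by the standard Fefferman--Stein "change of aperture / change of approximate identity" technique, adapted to the self-adjoint operator setting via the kernel estimates of Lemma~\ref{prop2.3}. The first step is a reproducing-formula reduction. Since $\varphi_2$ is even, Schwartz, and $\varphi_2(0)=1$, the function $\psi(\lambda):=-2\lambda\varphi_2'(\lambda)$ (or any convenient choice producing $\psi(0)=0$ and $\int_0^\infty \psi(s)\,\frac{ds}{s}=1$ after normalization) is even, Schwartz, vanishes at $0$, and we have the Calder\'on-type identity $\int_0^\infty \psi(s\sqrt L)\,\frac{ds}{s} = I$ on the range of $L$ (and on $L^2$ up to the null space of $L$, which is controlled by the Gaussian bound). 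More usefully, one writes $\varphi_1(t\sqrt L) = \int_0^\infty \varphi_1(t\sqrt L)\,\psi(s\sqrt L)\,\varphi_2(s\sqrt L)\,\frac{ds}{s}$ after inserting an extra $\varphi_2(s\sqrt L)$ via a three-factor resolution of the identity; here $\varphi_1(t\lambda)\psi(s\lambda)$ plays the role of the "composed" multiplier to which Lemma~\ref{prop2.3} applies, giving for its kernel $Q_{t,s}(x,y)$ the bound
\begin{equation*}
|Q_{t,s}(x,y)|\le C\Big(\frac{\min(s,t)}{\max(s,t)}\Big)\frac{\max(s,t)^\eta}{(\max(s,t)+|x-y|)^{n+\eta}},
\end{equation*}
with $\eta$ as large as we wish (note $\varphi_1(t\lambda)$ need not vanish at $0$, but the composition with $\psi(s\lambda)$ does, and a symmetry argument over $s\le t$ and $s\ge t$ recovers exactly the hypothesis of Lemma~\ref{prop2.3}).

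The second step is the pointwise domination. Fix $x$ and $|y-x|<\alpha_1 t$; applying the identity to $f$ and using the kernel bound,
\begin{equation*}
|\varphi_1(t\sqrt L)f(y)| \le C\int_0^\infty\!\!\int_{\RN} \Big(\tfrac{\min(s,t)}{\max(s,t)}\Big)\frac{\max(s,t)^\eta}{(\max(s,t)+|y-z|)^{n+\eta}}\,|\varphi_2(s\sqrt L)f(z)|\,dz\,\frac{ds}{s}.
\end{equation*}
Now I bound $|\varphi_2(s\sqrt L)f(z)|$ by $\varphi^{\ast}_{2,L,\alpha_2}f(w)$ for any $w$ with $|w-z|<\alpha_2 s$, and hence by the infimum of the Hardy--Littlewood maximal function of $(\varphi^{\ast}_{2,L,\alpha_2}f)^{r}$ over an appropriate ball, for a fixed exponent $r$ with $0<r<p$. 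The standard manipulation — splitting the $z$-integral into the region $|y-z|\lesssim \max(s,t)$ and dyadic annuli, and the $s$-integral into $s\le t$ and $s\ge t$ — then yields, after absorbing the geometric factor $\min(s,t)/\max(s,t)$ to make the $s$-integral converge and choosing $\eta$ large relative to $n/r$, the pointwise estimate
\begin{equation*}
\varphi^{\ast}_{1,L,\alpha_1}f(x) \le C\,\Big(\mathcal M\big[(\varphi^{\ast}_{2,L,\alpha_2}f)^{r}\big](x)\Big)^{1/r},
\end{equation*}
where $\mathcal M$ is the Hardy--Littlewood maximal operator. Since $p/r>1$, the Fefferman--Stein / Hardy--Littlewood maximal theorem gives $\|\varphi^{\ast}_{1,L,\alpha_1}f\|_{L^p}\le C\|(\mathcal M[g^r])^{1/r}\|_{L^p}=C\|\mathcal M[g^r]\|_{L^{p/r}}^{1/r}\le C\|g^r\|_{L^{p/r}}^{1/r}=C\|\varphi^{\ast}_{2,L,\alpha_2}f\|_{L^p}$, which is \eqref{e4.1}. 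The "as a consequence" statement follows by applying \eqref{e4.1} twice: first with $(\varphi_1,\alpha_1)=(\varphi,\alpha)$ and $(\varphi_2,\alpha_2)=(e^{-\lambda^2},1)$ — i.e. $\varphi_2(t\sqrt L)=e^{-t^2L}$, whose maximal function is $f^{\ast}_L$ — and then with the roles reversed, noting that $e^{-\lambda^2}$ is itself an even Schwartz function with value $1$ at $0$.

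The main obstacle is making the pointwise domination genuinely rigorous at two points: first, justifying the reproducing identity and the interchange of the $\frac{ds}{s}$-integral with everything else for $f\in L^2$ — this uses the spectral theorem together with the fact that the Gaussian bound (GE) forces $0$ not to be an eigenvalue with $L^2$ eigenfunctions, so $\int_\epsilon^{1/\epsilon}\psi(s\sqrt L)f\,\frac{ds}{s}\to f$ in $L^2$; and second, the control of $|\varphi_2(s\sqrt L)f(z)|$ by $\varphi^{\ast}_{2,L,\alpha_2}f$ on a ball of the \emph{correct} radius when the aperture $\alpha_2$ is small — here one must choose, for each $z$, a point $w$ near $z$ at scale $\alpha_2 s$, and the dyadic-annulus bookkeeping must carry along the ratio $\alpha_1/\alpha_2$; this only costs a constant depending on $\alpha_1,\alpha_2$ because $\eta$ can be taken arbitrarily large, so the extra factors $(1+|y-z|/s)^{n}$ needed to pass from a ball of radius $\alpha_2 s$ to one of radius $s$ are harmless. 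The remaining estimates are routine once Lemma~\ref{prop2.3} is in hand.
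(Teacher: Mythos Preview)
Your overall architecture is sound, and the Hardy--Littlewood maximal trick $(\mathcal M[g^r])^{1/r}$ with $r<p$ is a perfectly good substitute for the paper's appeal to the Peetre-type maximal function and Theorem~2.4 of \cite{CT}. But there is a real gap in your application of Lemma~\ref{prop2.3}.

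That lemma requires \emph{both} functions $\psi_1,\psi_2$ to vanish at the origin. In your setup the composed multiplier is $\varphi_1(t\lambda)\psi(s\lambda)$, with $\psi(0)=0$ but $\varphi_1(0)=1$. Your parenthetical claim that ``a symmetry argument over $s\le t$ and $s\ge t$ recovers exactly the hypothesis'' is not correct: inspecting the proof of Lemma~\ref{prop2.3}, the factor $\min(s,t)/\max(s,t)$ arises by borrowing one order of vanishing from the function at the \emph{smaller} scale. When $s\le t$ you can indeed extract $(s/t)$ from $\psi(0)=0$; but when $s>t$ you would need $(t/s)$, and for that you must divide by $t\lambda$ inside $\varphi_1(t\lambda)$, which is impossible since $\varphi_1(0)\neq 0$. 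Without that factor, the kernel bound for $s>t$ is only of size $\max(s,t)^{-n}$ with no almost-orthogonality gain, and your $\int_t^\infty\frac{ds}{s}$ integral diverges.

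The paper circumvents this exactly by first reducing both apertures to $1$ (via the elementary aperture-change estimate from \cite{CT}) and then writing $\varphi_1=\varphi_2+(\varphi_1-\varphi_2)$. The piece $\varphi_2$ gives $\varphi^*_{2,L,1}f$ trivially; for the difference $\psi:=\varphi_1-\varphi_2$ one has $\psi(0)=0$, so now the function at scale $t$ \emph{does} vanish at the origin and Lemma~\ref{prop2.3} (or rather its bilinear consequence used in the proof) applies in both regimes $s\le t$ and $s>t$. Once you insert this reduction, the rest of your argument goes through.
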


\begin{proof}  Recall  that
for any $0<\alpha_2\leq \alpha_1$,
$$\left\|\varphi^{\ast}_{L, \alpha_1}f\right\|_{L^p(\RR^n)}
\leq C\left(1+{\alpha_1\over\alpha_2}\right)^{n/p}
\left\|\varphi^{\ast}_{L, \alpha_2}f\right\|_{L^p(\RR^n)}
$$
for any  $\varphi \in {\mathscr S}(\mathbb R)$ (Theorem 2.3, \cite{CT}).
Now, we let $\psi(x):=\varphi_1(x)-\varphi_2(x)$, and then the proof of \eqref{e4.1} reduces to show that
\begin{eqnarray}\label{e4.2}
\left\|\psi^{\ast}_{L, 1}f\right\|_{L^p(\RR^n)}\leq
C\left\|\varphi^{\ast}_{2, L, 1}f\right\|_{L^p(\RR^n)}.
\end{eqnarray}

Let us show \eqref{e4.2}. Let $\Psi(x)=x^{2\kappa}\Phi(x)$ where
$\Phi(x)$ is the  function as in Lemma \ref{le2.1} and $2\kappa>{(n+1)/p}$.
By the  spectral theory (\cite{Y}), we have
$$
f=C_{\Psi, \  \varphi_2}\int_0^\infty \Psi(s\sqrt{L})\varphi_2(s \sqrt{L})f \, \frac{ds}{s}.
$$
Therefore,
$$
\psi(t\sqrt{L})f(x)=C \int_0^\infty \left(\psi(t\sqrt{L})\Psi(s \sqrt{L})\right)\varphi_2(s \sqrt{L})f(x) \, \frac{ds}{s}.
$$
 Let us denote the kernel of $\psi(t\sqrt{L})\Psi(s\sqrt{L})$ by
 $ K_{\psi(t\sqrt{L})\Psi(s\sqrt{L})}(x,y)$. Then for    $\lambda\in (\frac{n}{p}, \ 2\kappa)$,
\begin{align}\label{e4.3}
\hspace{-0.8cm}&\sup\limits_{|w|< t}|\psi(t\sqrt{L})f(x-w)|\\
&=C\sup\limits_{|w|< t}\big|\int_{\UHRN} K_{\psi(t\sqrt{L})\Psi(s\sqrt{L})}(x-w,z)
\varphi_2(s\sqrt{L})f(z) \frac{dzds}{s}\Big|\nonumber\\
&\leq C\sup\limits_{|w|< t}\int_{\UHRN} \big|K_{\psi(t\sqrt{L})\Psi(s\sqrt{L})}(x-w,z)
\big|\Big(1+\frac{|x-z|}{s}\Big)^{\lambda} \big|\varphi_2(s\sqrt{L})f(z)
\big|\Big(1+\frac{|x-z|}{s}\Big)^{-\lambda} \frac{dzds}{s}\nonumber\\
&\leq  \sup\limits_{z,s} \big|\varphi_2(s\sqrt{L})f(z)\big|\Big(1+\frac{|x-z|}{s}\Big)^{-\lambda}
 \sup\limits_{|w|< t}\int_{\UHRN} \big|K_{\psi(t\sqrt{L})\Psi(s\sqrt{L})}(x-w,z)
 \big|\Big(1+\frac{|x-z|}{s}\Big)^{\lambda}\, \frac{dzds}{s}.\nonumber
\end{align}
Next we will prove that
\begin{eqnarray}\label{e5}
\sup\limits_{|w|< t} \int_{\UHRN} \big|K_{\psi(t\sqrt{L})\Psi(s\sqrt{L})}(x-w,z)
\big|\Big(1+\frac{|x-z|}{s}\Big)^{\lambda}\, \frac{dzds}{s}\leq C.
\end{eqnarray}
Once  estimate \eqref{e5} is shown, \eqref{e4.2} follows. Indeed, it follows from
\eqref{e4.3}, \eqref{e5} and the condition $\lambda\in (\frac{n}{p}, \ 2\kappa)$  that
\begin{eqnarray*}
\left\|\psi^{\ast}_{L, 1}f\right\|_{L^p(\RR^n)}=\Big\|\sup\limits_{|w|< t}|\psi(t\sqrt{L})f(x-w)|\Big\|_{L^p_x(\R)}&\leq&
C\Big\|\sup\limits_{z,s} \big|\varphi_2(s\sqrt{L})f(z)\big|\Big(1+\frac{|x-z|}{s}\Big)^{-\lambda}\Big\|_{L^p_x(\R)}\nonumber\\
&\leq& C\left\|\sup\limits_{|y-x|<  t}|\varphi_2(t\sqrt{L})f(y)|\right\|_{L^p_x(\R)}\\
&= &C\Big\|\varphi^{\ast}_{2, L, 1}f\Big\|_{L^p(\RR^n)},
\end{eqnarray*}
where  we used  Theorem 2.4  of \cite{CT} in the second inequality.

 Let us prove \eqref{e5}.
Note that $|w|<t$. We write
\begin{eqnarray*}
\psi(t\sqrt{L})\Psi(s\sqrt{L})
=\left
\{
\begin{array}{ll}
\left({s\over t}\right)^{2\kappa} [\psi(t\sqrt{L})(t\sqrt{L})^{2\kappa}\Phi(s\sqrt{L})],  \ \ \ {\rm if}\ \ \ s\leq t; \\[8pt]
\left({t\over s}\right)^{2} [(t\sqrt{L})^{-2}\psi(t\sqrt{L})  (s\sqrt{L})^{2\kappa+2}\Phi(s\sqrt{L})],  \ \ \ {\rm if}\ \ \ s> t.
\end{array}
\right.
\end{eqnarray*}
We then   apply Lemma~\ref{prop2.3} to obtain that  for  $\eta\in (\lambda, 2\kappa)$,
 \begin{align*}
 \big|K_{\psi(t\sqrt{L})\Psi(s\sqrt{L})}(x-w,z)\big|\leq C
 \min\left( \left({s\over t}\right)^{2\kappa}, \left({t\over s}\right)^{2}\right)
  {\max(s,t)^\eta\over (\max(s,t)+ |x-w-z|)^{n+\eta}}.
 \end{align*}
 This, together with the fact that
 \begin{align*}
 \int_{|u|<s} {\max(s,t)^\eta\over (\max(s,t)+ |u-w|)^{n+\eta}}\Big(1+\frac{|u|}{s}\Big)^{\lambda} \, du
 \leq 2^{\lambda}\int_{|u|<s}  {\max(s,t)^\eta\over (\max(s,t)+ |u-w|)^{n+\eta}}\, du\leq C,
\end{align*}
shows
\begin{eqnarray}\label{e4.4}
&&\hspace{-1.5cm} \int_{\R} \big|K_{\psi(t\sqrt{L})
\Psi(s\sqrt{L})}(x-w,z)\big|\left(1+\frac{|x-z|}{s}\right)^{\lambda} \, dz \nonumber\\
 &\leq& C  \min\left( \left({s\over t}\right)^{2\kappa}, \left({t\over s}\right)^{2}\right)\left[1+
 \int_{|u|\geq s}{\max(s,t)^\eta\over (\max(s,t)+ |u-w|)^{n+\eta}}\left(1+\frac{|u|}{s}\right)^{\lambda} \, du\right].
\end{eqnarray}
To estimate the integrals over $|u|\geq s$, we note that if   $s\geq t$, then we use the
fact that $\eta>\lambda$ and $s+|u-w|\geq t+|u-w|\geq |w|+|u-w|\geq |u|$ to obtain
\begin{eqnarray}\label{e4.5}
 \int_{|u|\geq s} {s^\eta\over ({s}+ |u-w|)^{n+\eta}}\left(1+\frac{|u|}{s}\right)^{\lambda} \, du
 &\leq& 2^{\lambda}\int_{|u|\geq s} {s^\eta\over ({s}+ |u-w|)^{n+\eta}}\frac{|u|^{\lambda}}{s^\lambda}\, du\nonumber\\
 &\leq& C\int_{|u|\geq s} \frac{s^\eta} {{|u|}^{n+\eta}}\frac{|u|^{\lambda}}{s^\lambda}\, du
 \leq C.
\end{eqnarray}
If  $s<t$, then from the  fact that $t+|u-w|\geq |w|+|u-w|\geq |u|$  and $\eta>\lambda$,
\begin{eqnarray}\label{e4.6}
 \int_{|u|\geq s} {t^\eta\over ({t}+ |u-w|)^{n+\eta}}\left(1+\frac{|u|}{s}\right)^{\lambda} \, du
 &\leq& 2^{\lambda}\int_{|u|\geq s} {t^\eta\over ({t}+ |u-w|)^{n+\eta}}\frac{|u|^{\lambda}}{s^\lambda}\, du \nonumber\\
 &\leq& C\int_{|u|\geq s} \frac{t^\eta} {{|u|}^{n+\eta}}\frac{|u|^{\lambda}}{s^\lambda}\, du
 \leq C \left({t\over s}\right)^{\eta}.
\end{eqnarray}
Putting estimates \eqref{e4.5} and \eqref{e4.6} into \eqref{e4.4}, we have obtained that for any $|w|<t$,
\begin{eqnarray*}
 \int_{\R} \big|K_{\psi(t\sqrt{L})\Psi(s\sqrt{L})}(x-w,z)\big|\left (1+\frac{|x-z|}{s}\right)^{\lambda} \, dz
 &\leq&   C  \min\left( \left({s\over t}\right)^{2\kappa}, \left({s\over t}\right)^{2}\right)
 \left[1+ \max \left(1, \left({t\over s}\right)^{\eta} \right)\right]\\
 &\leq&   C \min\left( \left({s\over t}\right)^{2\kappa-\eta},  \left({t\over s}\right)^{2}\right).
\end{eqnarray*}
Observe that $\eta<2\kappa$. It follows
\begin{eqnarray*}
 \sup\limits_{|w|< t}\int_{\UHRN} \big|K_{\psi(t\sqrt{L})\Psi(s\sqrt{L})}(x-w,z)
 \big|\left(1+\frac{|x-z|}{s}\right)^{\lambda} \frac{dzds}{s}
 \leq C\int_0^\infty \min\left( \left({s\over t}\right)^{2\kappa-\eta},  \left({t\over s}\right)^{2}\right) \frac{ds}{s}\leq C,
\end{eqnarray*}
which shows estimate \eqref{e5}, and  the proof of Proposition~\ref{le3.1} is end.
\end{proof}

\bigskip

\noindent
{\it Proof of Theorem~\ref{th1.1}}.
To prove  the implication (ii) $\Rightarrow$   (i) of Theorem~\ref{th1.1},
from Proposition~\ref{le3.1} it suffices to show that for
 $f\in\HMAX\cap L^2({\Bbb R}^n),$    $f$ has a $(p, \infty, M)$
atomic representation.

We start with a suitable
version of the Calder\'on reproducing formula. Let $\Phi$ be a function
defined in Lemma~\ref{le2.1}, and set $\Psi(x):=x^{2M}\Phi(x)$,
$x\in{\mathbb{R}}$. By the spectral theory (\cite{Yo}), for
every $f\in L^2({\Bbb R}^n)$ one can write
 \begin{align}\label{e3.1}
f&=c_{\Psi}\int_0^\infty
\Psi({t\sqrt{L}})t^2Le^{-t^2L}f \, \frac{{\rm d}t}{t}\nonumber\\[4pt]
&=\lim_{\epsilon\to 0}c_{\Psi}\int_{\epsilon}^{1/\epsilon}
\Psi({t\sqrt{L}})t^2L e^{-t^2L}f \, \frac{{\rm d}t}{t}
\end{align}
with the integral converging in $L^2({\Bbb R}^n).$

Set
\begin{align*}
\eta(x)
:= c_{\Psi} \int_1^{\infty} t^2 x^2\Psi(tx) e^{-t^2x^2} \frac{dt}{t}
=
c_{\Psi} \int_x^{\infty} y\Psi(y)  e^{-y^2} dy, \quad x\neq 0
\end{align*}
with $\eta(0)=1$. It follows that  $\eta\in{\mathscr S}(\RR)$ is an even function, and
$$
\eta(a  x)-\eta(b  x)=c_{\Psi}\int_a^b t^2x^2\Psi(t  x)e^{-t^2 x^2} \frac{dt}{t}.
$$
By the spectral theory (\cite{Yo}) again, one has
\begin{align}\label{3.3}
c_{\Psi}\int_a^b
\Psi({t\sqrt{L}})t^2Le^{-t^2L}f \, \frac{{\rm d}t}{t}=\eta(a\sqrt{L})f(x)-\eta(b\sqrt{L})f(x).
\end{align}
Define,
$${\mathscr M}_Lf(x):=\sup\limits_{|x-y|<5\sqrt{n}t}\Big(|t^2Le^{-t^2L}f(y)|+|\eta(t\sqrt{L})f(y)|\Big).
$$
 By Proposition~\ref{le3.1},  it follows that
$$\|{\mathscr M}_Lf\|_{L^p(\R)}\leq C\|f\|_{\HMAX}, \ \ \ \ \  0<p\leq 1.
$$

Recall that ${\mathbb R}^{n+1}_+$  denotes the  upper
half-space in ${\mathbb R}^{n+1}$.
 If $O$ is an
open subset of ${\mathbb R}^n$, then the ``tent" over $O$, denoted
by ${\widehat O}$, is given as ${\widehat O}:=\{(x,t)\in \UHRN:  B(x, 4\sqrt{n}t)\subset O\}$.
For $ i\in{\Bbb Z}$, we  define the family of sets $O_i: =\{x\in {\Bbb R}^n: {\mathscr M}_Lf(x)>2^i\}$.
Now let $\{Q_{ij}\}_j$ be a  Whitney decomposition of  $O_i$  such that
$O_i=\cup_j Q_{ij}$ and let ${\widehat{O_i}}$ be a tent
region.
Set $\bar{e}=(1,\cdots,1)\in \R$.  For every $i,j$, we define
\begin{align}
\tilde{Q}_{ij}:=\{({y}, t)\in \UHRN: {y}+3t\bar{e}\in Q_{ij}\}.
\end{align}
It can be verified  that $\widehat{O_i}\subset \cup_j\tilde{Q}_{ij}$.
Indeed, for each $({y}^0, t^0)\in \widehat{O_i}$, we have that $B({y}^0,
4\sqrt{n}t^0)\subset O_i$.  Let  $\tilde{y}^0:={y}^0+3\bar{e}t^0$.
Observe that $\tilde{y}^0\in B({y}^0, 4\sqrt{n}t^0)$, and then $\tilde{y}^0\in O_i$.
 Then there exists some $Q_{ij_0}\subset O_i$ such that
$\tilde{y}^0\in Q_{ij_0}$,   hence   $({y}^0, t^0)\in \tilde{Q}_{ij_0}$
and $\widehat{O_i}\subset \bigcup_j\tilde{Q}_{ij}$.
Note that $\tilde{Q}_{ij}\cap \tilde{Q}_{ij'}=\emptyset$ when  $j\neq j'$.
We obtain an decomposition for $\UHRN$ as follows:
\begin{align*}
\UHRN=\cup_i \widehat{O}_i=\cup_i\widehat{O_i}
\big\backslash \widehat{O_{i+1}}=\cup_i\cup_j T_{ij},
\end{align*}
where
\begin{align*}
T_{ij}:=\tilde{Q}_{ij}\cap \widehat{O_i}
\big\backslash {\widehat{O_{i+\!1}}}.
\end{align*}
 Using the formula (\ref{e3.1}), one can write
 \begin{align}\label{e3.3}
f&=\sum_{ i, j }c_{\Psi}\int_0^\infty
\Psi(t\sqrt{L})\Big( \chi_{T_{ij}}t^2Le^{-t^2L}f\Big)\, \frac{{\rm d}t}{t}\nonumber\\[4pt]
&=: \sum_{i, j}  \lambda_{ij} a_{ij}
\end{align}
with the sum converging in $L^2(\R)$, where $\lambda_{ij}:= 2^{i}|Q_{ij}|^{1/p}, \    a_{ij}:=L^Mb_{ij}$, and
$$
b_{ij}:=(\lambda_{ij})^{-1} c_{\Psi}\int_0^\infty t^{2M} \Phi
(t\sqrt{L})\Big( \chi_{T_{ij}}t^2Le^{-t^2L}f\Big)\,
\frac{{\rm d}t}{t}.
$$

Let us show that the sum \eqref{e3.3} converges in $L^2(\R)$. Indeed, since
for $f\in L^2(\R)$,
$$\left(\int_{\UHRN} |t^2Le^{-t\sqrt{L}}f(y)|^2 \, \frac{dydt}{t}\right)^{1/2}\leq C \|f\|_{L^2(\R)},
$$
we use \eqref{e3.3} to obtain
 \begin{eqnarray*}
\left\|\sum_{|i|>N_1, |j|>N_2}  \lambda_{ij} a_{ij}\right\|_{L^2(\R)}&=&c_{\Psi}\left\|\sum_{|i|>N_1, |j|>N_2}\int_{\UHRN}K_{
(t^2L)^{M }\Phi({t\sqrt{L}})}(x,y) \chi_{T_{ij}}(y,t) t^2Le^{-t\sqrt{L}}f(y) \, \frac{{\rm d}t}{t}\right\|_{L^2(\R)} \\
&\leq& \sup\limits_{\|g\|_2\leq 1} \sum_{|i|>N_1, |j|>N_2}\int_{T_{ij}}
\big|(t^2L)^{M }\Phi({t\sqrt{L}})g(y) t^2Le^{-t\sqrt{L}}f(y)\big| \, \frac{dydt}{t}\\
&\leq &C\left(\sum_{|i|>N_1, |j|>N_2}\int_{T_{ij}}  |t^2Le^{-t\sqrt{L}}f(y)|^2 \, \frac{dydt}{t}\right)^{1/2} \to 0
\end{eqnarray*}
as $N_1\to \infty, N_2\to \infty.$

Next, we will show that, up to a normalization by a  multiplicative constant,
the $a_{ij}$ are $(p, \infty, M)$-atoms. Once the claim is established, we
shall have
\begin{align*}
\sum_{i,j}|\lambda_{ij}|^p = \sum_{i,j}2^{ip}|Q_{ij}|\leq C\sum_i2^{ip}|O_i|\leq C\|f\|^p_{\HMAX}
\end{align*}
as desired.

Let us now prove   that for every $i, j$, the function
$C^{-1} a_{ij}$
 is a $(p, \infty, M)$-atom associated with the cube $30Q_{ij}$ for some  constant $C$.
Observe that if $(y,t)\in T_{ij}$, then $B(y,4\sqrt{n}t)\in O_i$.  Denote by
$\tilde{y}:=y+3t\bar{e}$, and so $\tilde{y}\in Q_{ij}$ and  $B(\tilde{y},\sqrt{n}t)\in O_i$.
The fact that $Q_{ij}$ is the Whitney cube of $O_i$ implies that $5Q_{ij}\cap O_i^c\neq
\emptyset$.    Denote the side length of $Q_{ij}$ by $\ell(Q_{ij})$.   It then follows that
$t\leq 3 \ell(Q_{ij})$. Since $y+3\bar{e}t\in Q_{ij}$, we have that $y\in 20Q_{ij}$.
From Lemma~\ref{le2.1},   the integral kernel
 $K_{(t^{2}L)^k\Phi(t\sqrt{L})}$ of the operator $(t^{2}L)^k\Phi(t\sqrt{L})$ satisfies
$$
{\rm supp}\, K_{(t^{2}L)^k\Phi(t\sqrt{L})} \subseteq
\big\{(x,y)\in\mathbb{R}^n\times\mathbb{R}^n: |x-y|\leq t\big\}.
$$
\noindent This concludes that for every $k=0,1, \cdots, M$
 $$
{\rm supp}\, \big(L^k b_{ij}\big) \subseteq 30Q_{ij}.
$$
It remains to show that   $\big\|((\ell(Q_{ij})^2L)^{k}b_{ij}\big\|_{L^\infty({\Bbb R}^n)}\leq C (\ell(Q_{ij}))^{2M}
|Q_{ij}|^{-1/p},\ k=0,1, \cdots, M$. When  $K=0,1,\cdots, M-1$,
  it reduces to show
\begin{align}\label{e3.12}
\Big|\int_0^\infty\int_{\R} K_{
t^{2M}L^K\Phi(t^2L)}(x,y) \chi_{T_{ij}}(y,t)t^2Le^{-t^2L}f(y) \,dy\frac{{\rm d}t}{t}\Big|\leq C2^i \ell(Q_{ij})^{2(M-K)}.
\end{align}
Indeed,  If $\chi_{T_{ij}}(y,t)=1$,
then $(y,t)\in (\widehat{O_{i+1}})^c$, and so $B(y,4\sqrt{n}t)\cap (O_{i+1})^c\neq \emptyset$.
Let  $\bar{x}\in B(y,4\sqrt{n}t)\cap (O_{i+1})^c$. We have that $|t^2Le^{-t^2L}f(y)|\leq
{\mathscr M}_{L}f(\bar{x})\le 2^{i+1}$. By Lemma~\ref{le2.1},
\begin{align*}
&\Big|\int_0^\infty\int_{\R} K_{t^{2M}L^K\Phi(t^2L)}(x,y) \chi_{T_{ij}}(y,t)t^2Le^{-t^2L}f(y) \,dy\frac{{\rm d}t}{t}\Big|\\
&\leq C2^i \Big|\int_0^{c\ell(Q_{ij})} t^{2(M-K)}\int_{\R}
\big|K_{(t^2L)^K\Phi(t^2L)}(x,y)\big|  \,dy \frac{{\rm d}t}{t}\Big|\\
&\leq C2^i \int_0^{c\ell(Q_{ij})} t^{2(M-K)} \frac{{\rm d}t}{t}\\
&\leq C2^i\ell(Q_{ij})^{2(M-K)}
\end{align*}
since   $K=0,1,\cdots, M-1$.

Now we consider the case  $k=M$.
   The  proof is based on a modification of technique due to A. Calder\'on \cite{C}.
In this case, we need  to prove that for every $i,j$,
\begin{align}\label{e3.10}
\Big|\int_0^\infty\!\!\int_{\R}
K_{\Psi(t\sqrt{L})}(x,y) \chi_{T_{ij}}(y,t)t^2Le^{-t^2L}f(y) \,dy\frac{{\rm d}t}{t}\Big|\leq C2^{i}.
\end{align}
To show (\ref{e3.10}),   we fix $x$ and  let $d(x, Q_{ij})< 30\sqrt{n} {\ell}(Q_{ij})$.
We claim that the properties of the set defining  $\chi_{T_{ij}}(y,t)$ imply that
  there exist intervals $(0,b_0), (a_1,b_1), \cdots, (a_N,\infty)$, $0<b_0\leq
a_1<b_1\leq \cdots\leq a_N, \ 1\leq N \leq 2n+2$, such that,  for $l=0,1,\cdots, N-1$,
there holds  $a_{l+1}\leq 3^{2n+2}b_l$  and

\begin{itemize}
\item[(a)] $K_{\Psi(t\sqrt{L})}(x,y) \chi_{T_{ij}}(y,t)=0$ for $t>a_N$;

\item[(b)]    either $K_{\Psi(t\sqrt{L})}(x,y) \chi_{T_{ij}}(y,t)=0$   or $K_{\Psi(t\sqrt{L})}(x,y)
\chi_{T_{ij}}(y,t)=K_{\Psi(t\sqrt{L})}(x,y)$  for all $t\in (a_l, b_l);$

\item[(c)]  either $K_{\Psi(t\sqrt{L})}(x,y) \chi_{T_{ij}}(y,t)=0$   or $K_{\Psi(t\sqrt{L})}(x,y) \chi_{T_{ij}}(y,t)
=K_{\Psi(t\sqrt{L})}(x,y)$  for all  $t\in (0,b_0)$.
\end{itemize}

Assuming this claim for the moment, we observe that for  $d(x, Q_{ij})< 30\sqrt{n} {\ell}(Q_{ij})$, one can write
\begin{eqnarray}\label{e3.8}
 &&\hspace{-1cm}\int_0^\infty\int_{\R}
K_{\Psi(t\sqrt{L})}(x,y) \chi_{T_{ij}}(y,t)t^2Le^{-t^2L}f(y) \,dy\frac{{\rm d}t}{t}\\
&=&\left\{\int_0^{b_0}+\sum_{l=1}^{N-1}\int_{a_l}^{b_l}\right\}\int_{\R}
K_{\Psi(t\sqrt{L})}(x,y)\chi_{T_{ij}}(y,t)t^2Le^{-t^2L}f(y) \,dy\frac{{\rm d}t}{t} \nonumber\\
&+& \sum_{l=1}^{N-1}\int_{b_l}^{a_{l+1}} \int_{\R}
K_{\Psi(t\sqrt{L})}(x,y)\chi_{T_{ij}}(y,t)t^2Le^{-t^2L}f(y) \,dy\frac{{\rm d}t}{t} \nonumber\\
&=&I_1(x)+I_2(x).\nonumber
\end{eqnarray}
To estimate $I_1(x)$, we note that if $a_l\leq a<b\leq b_l$ or $0\leq a<b\leq b_0$, then  one has either
$$\int_{a}^{b}\int_{\R}
K_{\Psi(t\sqrt{L})}(x,y) \chi_{T_{ij}}(y,t)t^2Le^{-t^2L}f(y) \,dy\frac{{\rm d}t}{t}=0,
$$
or by \eqref{3.3},
\begin{eqnarray*}
 \int_{a}^{b}\int_{\R}
K_{\Psi(t\sqrt{L})}(x,y) \chi_{T_{ij}}(y,t)t^2Le^{-t^2L}f(y) \,dy\frac{{\rm d}t}{t}
 &=&\int_{a}^{b}
\Psi(t\sqrt{L})t^2Le^{-t^2L}f(x) \,\frac{{\rm d}t}{t}\\
&=&\eta(a\sqrt{L})f(x)-\eta(b\sqrt{L})f(x).
\end{eqnarray*}
Observe that for each $a\leq t\leq b$, if $|x-y|<t$, then $\chi_{T_{ij}}(y,t)=1$.
This tells us that $(y,t)\in (\widehat{O_{i+1}})^c$, hence $B(y,4\sqrt{n}t)\cap O_{i+1}\neq \emptyset$.
Assume that $\bar{x}\in B(y,4\sqrt{n}t)\cap (O_{i+1})^c$. From this, we have that $|x-\bar{x}|
\leq |x-y|+|y-\bar{x}|< 5\sqrt{n}t$ and ${\mathscr M}_{L}f(\bar{x})\leq 2^{i+1}$.
It implies that $|\eta(t\sqrt{L})f(x)|\leq {\mathscr M}_{L}f(\bar{x})\le C2^{i+1}$
for every $a\leq t\leq b$.
Therefore,  $|\eta(a\sqrt{L})f(x)|\le C2^{i+1}$ and 
$|\eta(b\sqrt{L})f(x)|\le C2^{i+1}$, and so $|I_1(x)|\leq C2^{i+1}$.

Consider   $I_2(x)$.   If $\chi_{T_{ij}}(y,t)=1$,
then $(y,t)\in (\widehat{O_{i+1}})^c$. Thus $B(y,4\sqrt{n}t)\cap (O_{i+1})^c\neq \emptyset$.
Assume that $\bar{x}\in B(y,4\sqrt{n}t)\cap (O_{i+1})^c$. We have that $|t^2Le^{-t^2L}f(y)|\leq
{\mathscr M}_{L}f(\bar{x})\le 2^{i+1}$. This, together with $a_{l+1}\leq cb_l$, implies that
\begin{eqnarray}\label{e3.11}
 \Big|\int_{b_l}^{a_{l+1}}\int_{\R}
K_{\Psi(t\sqrt{L})}(x,y) \chi_{T_{ij}}(y,t)t^2Le^{-t^2L}f(y) \,dy\frac{{\rm d}t}{t}\Big|
&\leq& 2^{i+1}\Big|\int_{b_l}^{cb_l}\int_{\R}
|K_{\Psi(t\sqrt{L})}(x,y)|  \,dy\frac{{\rm d}t}{t}\Big|\nonumber\\
&\leq& C 2^{i+1} \int_{b_l}^{cb_l}\frac{1}{t}\, dt\leq  C 2^{i+1},
\end{eqnarray}
which yields that $|I_2(x)|\leq C 2^{i+1}$.

Combining  (\ref{e3.8}) and (\ref{e3.11}), we  obtain (\ref{e3.10}). It follows that
$\|a_{ij}\|_{L^\infty}\leq C |Q_{ij}|^{-1/p}$. Up to a normalization by a  multiplicative constant,
the $a_{ij}$ are $(p, \infty, M)$-atoms.

It remains to   prove the claim (a), (b) and (c).  Note that
$
\chi_{T_{ij}}(y,t)=\chi_{\widehat{O_i}}(y,t)\cdot\chi_{(\widehat{O_{i+\!1}})^c}(y,t)\cdot\chi_{\tilde{Q}_{ij}}(y,t);
$
Assume that $Q_{ij}=\{(y_1,\cdots, y_n):  c_k\leq y_k\leq d_k, k=1,\cdots,n\}$. Then
\begin{eqnarray*}
\chi_{\tilde{Q}_{ij}}(y,t)&=&\prod_{l=1}^n
\chi_{\{c_l\leq y_l+3t\leq d_l\}}(y,t)  \\
&=&\prod_{l=1}^n \chi_{\{ y_l+3t\geq  c_l\}}(y,t)\cdot\chi_{\{ y_l+3t\leq d_l\}}(y,t).
\end{eqnarray*}
Let $\chi_l(y,t)$ be one of the characteristic functions $\chi_{\widehat{O_i}}(y,t),
 \chi_{(\widehat{O_{i+\!1}})^c}(y,t), \chi_{\{ y_l+3t\leq d_l\}}(y,t)$ and
 $\chi_{\{ y_l+3t\geq c_l\}}(y,t)$. We will prove that there exist  numbers $b_l$ and
$a_{l+1}$, $0<b_l\leq a_{l+1}, \ a_{l+1}\leq 3b_l$  such that
\begin{itemize}
 \item[{\bf(P)}]
  Given $x$, either $K_{\Psi(t\sqrt{L})}(x,y) \chi_{l}(y,t)=0$   or
 $K_{\Psi(t\sqrt{L})}(x,y) \chi_{l}(y,t)=K_{\Psi(t\sqrt{L})}(x,y)$  for all $t$ in each
 of the intervals complementary to $(b_l, a_{l+1})$. And for at least one of
 $\chi_l(y,t)$,    $K_{\Psi(t\sqrt{L})}(x,y) \chi_{l}(y,t)=0$ for $t>a_{l+1}$.
\end{itemize}
Then the same holds for $\chi_{T_{ij}}(y,t)= \prod_{l=1}^{2n+2}\chi_l(y,t)
K_{\Psi(t\sqrt{L})}(x,y)$ in each of the intervals complementary  to the union of
the intervals $(b_l, a_{l+1})$, which is what was asserted in the claim.
Thus we merely have to prove  {\bf (P)}. To do this,  we consider four cases.

\medskip

\noindent
{\bf Case 1:}    $\chi_l(y,t)=\chi_{\{ y_l+3t\geq  c_l\}}(y,t)$.

 \smallskip

 In this case, since supp $K_{\Psi(t\sqrt{L})}(x,y)\subseteq \{y: |x-y|\leq t\}$,
 we have that  supp $K_{\Psi(t\sqrt{L})}(x,y)\subseteq   \{y: x_l-t\leq y_l\leq x_l+t\}$.
If $x_l\geq c_l$, then  $y_l+3t\geq x_l+2t\geq c_l$ for any $t>0$. This yields
$$
K_{\Psi(t\sqrt{L})}(x,y) \chi_{l}(y,t)=K_{\Psi(t\sqrt{L})}(x,y),  \ \ \ \ t>0.
$$
If $x_l<c_l$, then we choose $b_l=\frac{c_l-x_l}{4}$ and $a_{l+1}= \frac{c_l-x_l}{2}$
(see Figure 1 below).

\smallskip

\begin{center}\label{eee}
\begin{tikzpicture}
\draw[thick,->,>=stealth](-2,0)--(6.5,0)node[right] {$\mathbb{R}^n$};
\draw[thick,->,>=stealth](0,0)--(0,4) node[above] {$t$};
\draw[domain=1:1,black,samples at={0,0,1.3}]plot (\x,{3*\x}) ;
\draw[domain=1:1,black,samples at={0,0,-1.3}]plot (\x,{-3*\x});
\draw[domain=1:1,black,samples at={2,-1.5,4}]plot (\x,{-0.5*(\x-4)});
\draw[domain=1:1,black,samples at={2,-1.5,6}]plot (\x,{-0.5*(\x-6)});
\draw[black,dashed](-0.57,1.71)--(0.57,1.71);
\draw[black,dashed](-0.8,2.4)--(0.8,2.4);
\fill[black!100] (0,1.71)  circle (0.4ex);
node\node[] at (0.2,1.5) {$b_l$};
\fill[black!100] (0,2.4)  circle (0.4ex);
node\node[] at (0.4,2.2) {$a_{l+1}$};
node\node[] at (3,1) {$\widetilde{Q}_{ij}$};
\coordinate (C) at(5,0);
\coordinate (B) at(4,0);
\draw (C)+(0,-0.15) node[yscale=1,xscale=6,rotate=90]{\{} ;
\draw (C)+(0,-0.15) coordinate[label=below:{$Q_{ij}$}] ;
\end{tikzpicture}
\end{center}
\begin{center}
Figure 1.
\end{center}

\medskip

\noindent
In the case of $t<b_l$,  we have $y_l+3t\leq x_l+4t<c_l,$
which implies that  $K_{\Psi(t\sqrt{L})}(x,y) \chi_{T_{ij}}(y,t)=0$. In the case of  $t> a_{l+1}$, we have $
y_l+3t\geq x_l+2t>c_l.$
This implies that $K_{\Psi(t\sqrt{L})}(x,y) \chi_{l}(y,t)=K_{\Psi(t\sqrt{L})}(x,y)$.

\medskip

\noindent
{\bf Case 2:}    $\chi_l(y,t)=\chi_{\{ y_l+3t\leq  d_l\}}(y,t)$.

\smallskip

 Since supp $K_{\Psi(t\sqrt{L})}(x,y)\subseteq \{y: |x-y|\leq t\}$,
 we have that  supp $K_{\Psi(t\sqrt{L})}(x,y)\subseteq  \{y: x_l-t\leq y_l\leq x_l+t\}$.
When $x_l\geq d_l$,  we have that $y_l+3t\geq x_l+2t> d_l$ for any $t>0$.  This tells us
$$
K_{\Psi(t\sqrt{L})}(x,y) \chi_{l}(y,t)=0,  \ \ \ {\rm for} \ t>0.
$$
When $x_l<d_l$, we choose $b_l=\frac{d_l-x_l}{4}$ and $a_{l+1}= \frac{d_l-x_l}{2}$. If $t<b_l$, then
 $
y_l+3t\leq x_l+4t<d_l,
$
which implies that
 $K_{\Psi(t\sqrt{L})}(x,y) \chi_{l}(y,t)=K_{\Psi(t\sqrt{L})}(x,y).
$
If  $t> a_{l+1}$, then
$
y_l+3t\geq x_l+2t>d_l.
$
From this, we have that
 $K_{\Psi(t\sqrt{L})}(x,y) \chi_{l}(y,t)=0.$

\medskip

\noindent
{\bf Case 3:}   $\chi_l(y,t)=\chi_{\widehat{O_i}}(y,t)$.

\smallskip

In this case, we choose $b_l=\frac{1}{5\sqrt{n}}d(x, O_i^c)$
and $a_{l+1}=\frac{1}{2\sqrt{n}}d(x, O_i^c)$. Let $|x-y|<t$. If $t< \frac{1}{5\sqrt{n}}d(x, O_i^c)$, then
$
d(y, O_i^c)\geq d(x,O_i^c)-|x-y|> 5\sqrt{n}t-t\geq 4\sqrt{n}t.
$
This tells us
$$K_{\Psi(t\sqrt{L})}(x,y) \chi_{l}(y,t)=K_{\Psi(t\sqrt{L})}(x,y)
$$
for   $t< \frac{1}{5\sqrt{n}}d(x, O_i^c)$.
If $t> \frac{1}{2\sqrt{n}}d(x, O_i^c)$, then
$
d(y, O_i^c)\leq  d(x,O_i^c)+d(x,y)< (2\sqrt{n}+1)t<4\sqrt{n}t.
$
Hence, if  $t>\frac{1}{2\sqrt{n}}d(x, O_i^c)$, then
$$K_{\Psi(t\sqrt{L})}(x,y) \chi_{l}(y,t)=0.
$$

\smallskip

\noindent
{\bf Case 4:}   $\chi_l(y,t)=\chi_{(\widehat{O_{i+\!1}})^c}(y,t)$.

\smallskip

In this case,
we can choose $b_l=\frac{1}{5\sqrt{n}}d(x, O_{i+1}^c)$ and $a_{l+1}=\frac{1}{2\sqrt{n}}d(x, O_{i+1}^c)$.
The proof can be an adaptation of the proof  as in {\bf Case 3}, and we omit the detail here.

\smallskip

This concludes the proof of the claim {\bf (P)}.
We have obtained the proof of
 an implication of  (ii) $\Rightarrow$ (i)  of Theorem~\ref{th1.1}.
 The proof of Theorem \ref{th1.1} is complete.
  \hfill{}$\Box$

  \bigskip

\bigskip

\bigskip

In the end of this section, we consider an electromagnetic Laplacian
$$
L=(i\nabla -A(x))^2 +V(x), \ \ \ \ n\geq 3.
$$
Recall that a measurable function $V$ on $\R$ is in the Kato class when
$$
 \lim_{r\downarrow 0}\sup_x \int_{|x-y|<r} {|V(y)|\over |x-y|^{n-2}} dy=0,
$$
while the Kato norm is defined by
$$
\|V\|_K=\sup_x   \int  {|V(y)|\over |x-y|^{n-2}} dy.
$$

\medskip

\begin{prop}\label{prop4.1}
  Consider an electromagnetic Laplacian
$$
L=(i\nabla -A(x))^2 +V(x), \ \ \ \ n\geq 3.
$$ Assume that  $A\in L^2_{\rm loc}(\R, \R)$,
and the positive and negative parts $V_{\pm}$ of $V$ satisfy $V_+$ is of Kato class,
$\|V_-\|_K <c_n=\pi^{n/2}/\Gamma(n/2-1)$.  Then for every number $M> {n\over 2}({1\over p}-1)$,
the spaces $ H^p_{L,  \max}(\RR^n)$ and $H^p_{L, {\rm at},  \infty, M}(\RR^n)$ coincide. In particular,
$$
\|f\|_{ H^p_{L,  \max}(\RR^n)}\approx\| f\|_{H^p_{L, {\rm at}, \infty, M}(\RR^n)}.
$$
\end{prop}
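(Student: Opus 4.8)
The plan is to deduce Proposition~\ref{prop4.1} from Theorem~\ref{th1.1}. Once we know that the electromagnetic Laplacian $L=(i\nabla-A(x))^2+V(x)$ satisfies the structural hypotheses ${\bf (H1)}$ and ${\bf (H2)}$, Theorem~\ref{th1.1} gives the equivalence of its conditions (i)--(iii); taking in (ii) the even Schwartz function $\varphi(\lambda)=e^{-\lambda^2}$ with $\varphi(0)=1$ and $\alpha=1$ turns $\varphi^{\ast}_{L,\alpha}f$ into the heat non-tangential maximal function $f^{\ast}_L$, so the equivalence (i)$\Leftrightarrow$(ii), together with the norm comparison in Proposition~\ref{le3.1}, yields $H^p_{L,\max}(\RR^n)=H^p_{L,{\rm at},\infty,M}(\RR^n)$ with equivalence of quasi-norms. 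Thus the entire task is to verify ${\bf (H1)}$ and ${\bf (H2)}$ for this $L$ under the stated hypotheses on $A$ and $V$.

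To check ${\bf (H1)}$: since $A\in L^2_{\rm loc}(\RR^n,\RR^n)$, the magnetic form $u\mapsto\int_{\RR^n}|(i\nabla-A)u|^2\,dx$ with its natural form domain is closed and nonnegative and defines the nonnegative self-adjoint operator $(i\nabla-A)^2$. Adding $V=V_+-V_-$ is handled by the KLMN theorem: $V_+$ of Kato class is infinitesimally form-bounded relative to $-\Delta$, hence, via the diamagnetic inequality $|\nabla|u||\le|(i\nabla-A)u|$ a.e., relative to $(i\nabla-A)^2$; and the condition $\|V_-\|_K<c_n$ is the classical sharp bound ensuring $\int_{\RR^n}V_-|u|^2\le\int_{\RR^n}|\nabla|u||^2\le\int_{\RR^n}|(i\nabla-A)u|^2$, so the total form $u\mapsto\int|(i\nabla-A)u|^2+\int V|u|^2$ is closed and nonnegative. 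It therefore generates a nonnegative self-adjoint operator $L$, as required.

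For ${\bf (H2)}$ the key reduction is the diamagnetic inequality at the semigroup level. Combined with positivity of Schr\"odinger semigroups and the form bound $-\Delta+V\ge-\Delta-V_-$, it yields the pointwise domination $|e^{-tL}f(x)|\le e^{-t(-\Delta-V_-)}|f|(x)$, hence $|p_t(x,y)|\le K^{-\Delta-V_-}_t(x,y)$ for the heat kernel of the scalar operator $-\Delta-V_-$. So it suffices to establish a Gaussian upper bound for $K^{-\Delta-V_-}_t$, which I would obtain from the Duhamel/Dyson expansion of $e^{-t(-\Delta-V_-)}$ around the Gaussian semigroup $e^{t\Delta}$: the $k$-th term has kernel dominated by a time-simplex integral of a convolution chain of Gaussian kernels against $V_-$; the product of Gaussian factors reproduces $Ct^{-n/2}\exp(-|x-y|^2/(ct))$ (with a slightly larger constant $c$), while the scalar sum converges because $\sup_x\int_0^\infty\!\!\int p^0_s(x,z)V_-(z)\,dz\,ds$ equals a constant multiple of $\|V_-\|_K$ coming from the Green's function $\int_0^\infty p^0_s(x,z)\,ds\sim|x-z|^{-(n-2)}$, and $\|V_-\|_K<c_n$ keeps this below the convergence threshold. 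One may alternatively invoke the known heat kernel estimates for Schr\"odinger operators with Kato-class potentials (see the discussion in \cite{O}). Either way $|p_t(x,y)|\le Ct^{-n/2}\exp(-|x-y|^2/(ct))$, which is ${\bf (H2)}$, and Theorem~\ref{th1.1} then completes the proof.

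The main obstacle is the last step: proving the Gaussian bound for $-\Delta-V_-$ under exactly the borderline smallness $\|V_-\|_K<c_n$. One must organize the perturbation series so that its recursion constant is controlled by $\|V_-\|_K$ against the sharp Newtonian normalization (forcing it below $1$) and simultaneously propagate the spatial Gaussian factor through the convolution chain, using that integrating the Gaussian heat kernels in the time variables over the simplex still leaves a Gaussian of slightly weaker constant. The verification of ${\bf (H1)}$ is comparatively routine, being a direct combination of the KLMN theorem with the diamagnetic inequality.
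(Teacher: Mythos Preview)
Your proposal is correct and follows exactly the paper's approach: verify that $L$ satisfies $\mathbf{(H1)}$--$\mathbf{(H2)}$ and then invoke Theorem~\ref{th1.1}. The paper's proof simply cites \cite{CD} for the self-adjointness and Gaussian heat kernel bound, whereas you have sketched the content of that reference (diamagnetic inequality plus perturbation/Dyson series for $-\Delta-V_-$); so your write-up is a correct, more detailed version of the same argument.
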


\begin{proof}
It is known (see \cite{CD}) that under assumptions of Proposition~\ref{prop4.1},
the operator $L$ has a unique nonnegative self-adjoint
extension, $e^{-tL}$ is an integral operator whose kernel satisfies the Gaussian estimate ({\bf H2}).
 Now  Proposition~\ref{prop4.1}
is a straightforward consequence of
 Theorem~\ref{th1.1}.
\end{proof}

 \bigskip

\noindent
{\bf Remarks.}\  i) Consider $L=-\Delta +V(x)$, where $V\in L^1_{\rm loc}({\Bbb R}^n)$ is a
non-negative function on ${\Bbb R}^n$. It is proved in \cite{HLMMY} that
the spaces $ H^1_{L,  \max}(\RR^n)$ and $H^1_{L, S}(\RR^n)$ are equivalent,
and then for every number $M\geq 1$,  $ H^1_{L,  \max}(\RR^n)$ and $H^1_{L, {\rm at}, 2, M}(\RR^n)$ coincide.
See also \cite{DZ}.
  However, the result of Proposition~\ref{prop4.1}  is  new.

\smallskip

ii)  Given an operator    $L$ satisfying ${\bf (H1)}$-${\bf (H2)}$,
we may define the spaces $H^p_{L,\ rad}(\R), 0<p\leq 1 $
as the completion of $\{f \in L^2(\R): \|f^+_L\|_{L^p(\R)}<
\infty\}$ with respect to $L^p$-norm of the radial  maximal function; i.e.,
\begin{eqnarray*}
\big\|f\big\|_{H^p_{L,  rad}(\R)}:=\big\|f^+_L\big\|_{L^p(\R)}:=\left\|\sup\limits_{t>0}|e^{-t^2L}f(x)|\right\|_{L^p(\R)} .
\end{eqnarray*}
For every $1<q\leq \infty$ and every
number $M> {n\over 2}({1\over p}-1)$, any $(p, q, M)$-atom $a$ is in $ H^p_{L, rad}(\R)$   and so the following
continuous inclusions hold:
$$
H^p_{L, {\rm at}, q, M}(\RR^n)\subseteq  H^p_{L,  rad}(\R).
$$
See \cite{HLMMY, DL, DZ}).
 To the best of our knowledge, the
continuous inclusion of  $ H^p_{L,   rad}(\R)\subseteq
H^p_{L, {\rm at}, q, M}(\RR^n)$
   is still an
open question.

\medskip

\bigskip

\noindent
{\bf Acknowledgments.} \  L. Song is supported by  NNSF of China (Grant No. 11471338) and the
Fundamental Research Funds for the Central Universities (Grant No. 14lgzd03).   L.  Yan is  supported by
 NNSF of China (Grant No.    11371378).
We would like to thank P. Auscher, P. Chen, X.T. Duong,  J. Li  and F. Negreira  for
useful discussions 
and comments.

\bigskip

\end{document}